
\documentclass{alggeom}
\usepackage{amsmath}
\usepackage{multirow}
\usepackage[all]{xy}
%
%This part was handed down by giovanni, who received it from someone else (and so on, possibly). it is used to automatically put names to references: the labels must be in the form \label{kind:name} and then usign "as in \Ref{kind}{name}" (capital) you get "as in Kind 2.3" with Kind in bold font.
\usepackage{ifthen}
\newcommand{\cit}[1]{{\rm \textbf{#1}}}

\newcommand{\Ref}[2]{\cit{%
\ifthenelse{\equal{#1}{thm}}{Theorem}{}%
\ifthenelse{\equal{#1}{prop}}{Proposition}{}%
\ifthenelse{\equal{#1}{lem}}{Lemma}{}%
\ifthenelse{\equal{#1}{cor}}{Corollary}{}%
\ifthenelse{\equal{#1}{defn}}{Definition}{}%
\ifthenelse{\equal{#1}{oss}}{Remark}{}%
\ifthenelse{\equal{#1}{rmk}}{Remark}{}%
\ifthenelse{\equal{#1}{sec}}{Section}{}%
\ifthenelse{\equal{#1}{ex}}{Example}{}%
\ifthenelse{\equal{#1}{conj}}{Conjecture}{}%
\ifthenelse{\equal{#1}{ssec}}{Subsection}{}%
\ifthenelse{\equal{#1}{tab}}{Table}{}%
\ifthenelse{\equal{#1}{cla}}{Claim}{}%
\  \ref{#1:#2}%
}}
%
%

%Here there should be all possible theorem style environments we might need, if you use different acronyms to call these you can duplicate the definition as i did in the case of the remark type.
\newtheorem{prop}{Proposition}[section]
\newtheorem*{prop*}{Proposition}
\newtheorem{thm}[prop]{Theorem}
\newtheorem{lem}[prop]{Lemma} 
\newtheorem{cor}[prop]{Corollary}

\theoremstyle{definition}
\newtheorem{defn}[prop]{Definition}
\newtheorem{ex}[prop]{Example}

\theoremstyle{remark}
\newtheorem{oss}[prop]{Remark}
\newtheorem{rmk}[prop]{Remark}

\numberwithin{equation}{section}
%
%

%shortcuts giovanni and I often uses, add also your favourite ones!
\newcommand{\hk}{hyperk\"{a}hler }

\newcommand{\kahl}{K\"{a}hler }

\newcommand{\mc}[1]{\mathcal{#1}}
\DeclareMathOperator*{\rk}{rk}
\DeclareMathOperator*{\Pic}{Pic}
\newcommand{\NS}{\mathrm{NS}}
\newcommand{\ZZ}{\mathbb{Z}}
\newcommand{\Z}{\mathbb{Z}}
\newcommand{\C}{\mathbb{C}}
\newcommand{\Q}{\mathbb{Q}}
\newcommand{\R}{\mathbb{R}}

\newcommand{\PP}{\mathbb{P}}
\newcommand{\CC}{\mathbb{C}}

\newcommand{\aut}{\mathrm{Aut}}
\newcommand{\id}{\mathrm{id}}
\newcommand{\Hom}{\mathrm{Hom}}
\newcommand{\Aut}{\mathrm{Aut}}
\newcommand{\GL}{\mathrm{GL}}
\renewcommand{\mod}[1]{\mathrm{\text{ }(mod\text{ }#1)}}
\newcommand{\lat}[1]{\left\langle#1\right\rangle}

\usepackage{hyperref}

\begin{document}

\title{Prime order automorphisms of abelian surfaces:\\ a lattice-theoretic point of view}
\author{Giovanni Mongardi}
\email{mongardi@mat.uniroma3.it}
\address{Department of Mathematics, University of Milan\\ via Cesare Saldini 50, Milan, Italy}
\author{K\'evin Tari}
\email{kevin.tari@math.univ-poitiers.fr}
\address{Laboratoire de Math\'ematiques et Applications, Universit\'e de Poitiers\\
T\'el\'eport 2 - BP 30179
Boulevard Marie et Pierre Curie\\
86962 Futuroscope Chasseneuil Cedex,
France }
\author{Malte Wandel}
\email{wandel@kurims.kyoto-u.ac.jp}
\address{RIMS, Kyoto University\\ Kitashirakawa Oiwake cho, Sakyo-ku,
Kyoto 606-8502, Japan}
%

%\dedication{A dedication can be included here.}
\classification{14J50, 14K99}
\keywords{Abelian surface, automorphisms}
\thanks{First named author was supported by FIRB 2012 ``Spazi di Moduli e applicazioni'' and partially supported by SFB/TR 45 ``Periods, moduli spaces and arithmetic of algebraic varieties''\\The third named author is supported by JSPS Grant-in-Aid for Scientific Research (S)25220701}

\begin{abstract}
In this article, we focus on a new perspective of automorphisms of complex 2-tori, reviewing previous works from a lattice-theoretic point of view. In particular, we give a classification of families of symplectic and non-symplectic automorphisms of prime order.
\end{abstract}

\maketitle
\tableofcontents

\vspace*{6pt}
\setcounter{section}{-1}
\section{Introduction}
This article grew out of the attempt to study prime order automorphisms of generalised Kummer manifolds, which will be done in the upcoming article \cite{MTW}. These are irreducible holomorphic symplectic manifolds introduced by Beauville (\cite{Bea83}), which can be seen as a generalisation of the classical notion of Kummer surfaces to higher dimensions. In particular, for all $n\geq 3$ there is a natural way to associate to each complex $2$-torus $A$ a generalised Kummer manifold $K_n(A)$ of complex dimension $2n-2$ and every group automorphism of the torus will induce in a natural way an automorphism of the Kummer manifold (a so-called \em natural \em automorphism). The most important invariant of an irreducible holomorphic symplectic manifold $X$ is its second integral cohomology $H^2(X,\mathbb{Z})$ which carries - besides its pure weight two Hodge structure - a non-degenerate symmetric bilinear form. Any automorphism of $X$ acts on $H^2(X,\mathbb{Z})$ by a Hodge isometry. The study of automorphisms of these manifolds is therefore closely related to the study of the underlying lattices and their isometries. In the case of a natural automorphism of $X=K_n(A)$ the action on the lattice $H^2(X,\mathbb{Z})\simeq H^2(A,\Z)\oplus\lat{-2n-2}$ is completely determined by the action of the underlying $2$-torus-automorphism on the $H^2(A,\mathbb{Z})$ of the torus. Thus we are interested in a classification of automorphisms of $2$-tori using the induced action on $H^2(A,\mathbb{Z})$.

On the other hand, when studying tori and their automorphisms, the most important invariant is the Hodge structure on $H^1(A,\mathbb{Z})$. Automorphisms of $2$-tori have been studied by several people with the final classification given by Fujiki in \cite{Fuj88}. For further references see loc.\ cit.\ and the textbook of Birkenhake$-$Lange \cite{BL92}. The classification by Fujiki gives a complete list of pairs $(A,G)$ where $A$ is a complex $2$-torus and $G$ a finite group of group automorphisms of $A$. However, the induced action on $H^2(A,\mathbb{Z})$ is not in the focus of his considerations. (It is implicitly contained in the case of symplectic automorphisms, see \cite[Section 6]{Fuj88}.)
In this article we complement Fujiki's classification by analysing, for each example, the induced action on the second cohomology lattice of the $2$-torus. In particular, we give the invariant lattices of the isometries, restricting ourselves to the case of prime order automorphisms. For the non-symplectic case, the invariant lattice of such an automorphism characterises the Fujiki's family it is contained in. When $\sigma$ is an automorphism of $A$, we denote by $G_\sigma$ the group generated by $\sigma$ and $-\id$. The main result (cf.\ \Ref{prop}{list}) in this case is as follows:
\begin{prop}\label{prop:list2}
Let $A$ be a $2$-dimensional complex torus and $\sigma$ be a non-symplectic group automorphism of $A$ with an action of prime order $p$ on $H^2(A,\Z)$. Then the invariant lattice $T(G_\sigma)$ is isomorphic to one of the lattices contained in the table below. We denote by $r$ the rank of $T(G_\sigma)$, by $a$ the length of its discriminant group and by $\mathrm{dim}$ the dimension of the family of abelian surfaces admitting an embedding $G_\sigma\subseteq\Aut_0(A)$.

\vspace{5pt}
\renewcommand{\arraystretch}{1.5}
\begin{center}
\begin{tabular}{c|c|c|c|c}%\label{tab:lattices}
$p$&$r$&$\mathrm{dim}$&$a$&$T(G_\sigma)$\\
\hline
\multirow{4}{*}{$2$}&\multirow{3}{*}{$2$}&\multirow{3}{*}{$2$}&$0$&$U$\\
\cline{4-5}
&&&$2$&$U(2)$\\
\cline{4-5}
&&&$2$&$\langle2\rangle\oplus\langle -2\rangle$\\
\cline{2-5}
&$4$&$0$&$2$&$U\oplus\langle -2\rangle^{\oplus2}$\\
\hline
\multirow{3}{*}{$3$}&\multirow{2}{*}{$2$}&\multirow{2}{*}{$1$}&$0$&$U$\\
\cline{4-5}
&&&$2$&$U(3)$\\
\cline{2-5}
&$4$&$0$&$1$&$U\oplus A_2(-1)$\\
\hline
$5$&$2$&$0$&$1$&$H_5$
\end{tabular}
\end{center}

\vspace{5pt}
\end{prop}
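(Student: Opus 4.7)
The strategy is to combine Fujiki's classification of finite group actions on complex $2$-tori with a case-by-case analysis of the induced action on second cohomology. The starting observation is the isomorphism $H^2(A,\Z) \cong \wedge^2 H^1(A,\Z) \cong U^{\oplus 3}$ together with $\wedge^2(-\id) = \id$, so that $-\id$ acts trivially on $H^2$; hence $T(G_\sigma) = T(\sigma)$ inside $U^{\oplus 3}$ and the whole problem reduces to classifying the invariant lattices $T(\sigma)$ of non-symplectic $\sigma$ whose induced action on $H^2$ has prime order $p$.

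For each prime $p$ I would first enumerate the admissible \emph{rational} representations of $\sigma$ on $H^1(A,\Q)$. Writing the eigenvalues of $\sigma$ on $V^{*} = H^{1,0}(A)$ as $(\alpha,\beta)$, the non-symplectic hypothesis reads $\alpha\beta = \zeta_p$, a primitive $p$-th root of unity. For $p=2$ this produces two rational types: an honest involution with eigenvalues $(1,-1)$ on $V^{*}$ (giving $r=2$ on $H^2$), and an order-$4$ automorphism with eigenvalues in $\{\pm i\}$ on $V^{*}$ whose induced action on $H^2$ has order $2$ and yields $r=4$. For $p=3$ the possibilities are $(\alpha,\beta)=(1,\zeta_3)$ with $r=2$ and $(\zeta_3,\zeta_3)$ with $r=4$. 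For $p=5$ the only option compatible with rationality is that the minimal polynomial of $\sigma$ on $H^1(A,\Q)$ equals $\Phi_5$; a direct computation on $\wedge^2$ then gives $r=2$ with the remaining four dimensions forming a single $\Phi_5$-block.

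The next step is to refine each rational type to its integral conjugacy classes inside $U^{\oplus 3}$, and to compute for each the invariant lattice. The signature is read off from the Hodge decomposition, and the discriminant form can be pinned down by combining Nikulin's existence and uniqueness results for even lattices with the additional constraint that, for $p$ odd, the orthogonal complement $T(\sigma)^{\perp}$ carries a $\Z[\zeta_p]$-module structure. This should produce precisely the lattices listed in the table: for $p=2$, $r=2$ the three options $U$, $U(2)$ and $\langle 2\rangle\oplus\langle-2\rangle$ correspond to the admissible gluings between the $(\pm 1)$-eigenlattices of $\sigma$ on $H^1$, while $U\oplus\langle-2\rangle^{\oplus 2}$ comes from the order-$4$ type; analogous analyses handle the $p=3$ and $p=5$ rows. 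The dimension of each family is the dimension of the moduli of Hodge structures on $H^1(A,\Z)$ compatible with the fixed $\sigma$-action, namely the dimension of the $\zeta_p$-eigenspace of $\sigma^{*}$ on $V^{*}$ viewed as a parameter space, yielding $\dim=2$ for $p=2, r=2$, $\dim=1$ for $p=3, r=2$, and $\dim=0$ otherwise.

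The main obstacle I expect is the passage from rational to integral conjugacy classes: several rows of the table (the three rank-$2$ lattices for $p=2$ and the two rank-$2$ lattices for $p=3$) correspond to the same rational $\sigma$-representation on $H^1(A,\Q)$ but to genuinely distinct $\Z[\sigma]$-module structures on $H^1(A,\Z)$. Enumerating these requires a careful analysis of admissible integral lattice structures, which via $\wedge^2$ translates into the existence and uniqueness of unimodular overlattices of $T(\sigma)\oplus T(\sigma)^{\perp}$ with prescribed discriminant gluing; this is where the finer input from Nikulin's theory and the arithmetic of $\Z[\zeta_p]$-modules will carry the weight of the argument.
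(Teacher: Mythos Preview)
Your strategy is sound in principle but takes a genuinely different route from the paper. You work from $H^1$: enumerate rational $\sigma$-representations via eigenvalues on $H^{1,0}$, then refine to integral $\Z[\sigma]$-module structures on $H^1(A,\Z)$, and finally push through $\wedge^2$. The paper instead works almost entirely on $H^2$, using the abstract classification of even hyperbolic $p$-elementary lattices. Concretely, the paper first shows (via a short $H^1$-eigenspace argument, essentially your first step) that $r=\rk T(G_\sigma)$ is even; from the non-symplectic hypothesis it deduces $T(G_\sigma)\subset \NS(A)$ is hyperbolic; and since $T(G_\sigma)$ and its orthogonal in $U^{\oplus3}$ share the same discriminant group, the length satisfies $a\le \min(r,6-r)$. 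Then Nikulin's theorem (for $p=2$) and the Rudakov--Shafarevich theorem (for $p\ge3$) list all possible $(r,a,\delta)$ resp.\ $(r,a)$, and uniqueness in genus pins down the lattice. For $p=5$ an extra arithmetic constraint (that $d_{S(G_\sigma)}/p^{\,p-2}$ be a rational square, from the $\Z[\zeta_p]$-structure on $S(G_\sigma)$) kills $a=0$ and $a=2$, leaving only $H_5$. The existence of each entry is then exhibited by explicit Fujiki families, and the dimensions come from the period domain $D^H$ of Section~\ref{sec:modsp}.

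The advantage of the paper's approach is precisely that it sidesteps the obstacle you correctly flag: it never needs to enumerate integral conjugacy classes of $\sigma$ on $H^1(A,\Z)$ or to analyse gluings of $(\pm1)$-eigenlattices. All the work is done by off-the-shelf existence/uniqueness theorems for $p$-elementary lattices together with the two simple constraints $r$ even and $a\le\min(r,6-r)$. Your approach would recover the same table, and has the conceptual merit of explaining \emph{why} three distinct rank-$2$ lattices appear for $p=2$ (different $\Z[\sigma]$-structures on $H^1$), but it requires substantially more bookkeeping to execute, and your proposal leaves that bookkeeping as a promissory note. If you pursue your route, note in particular that you must also argue that distinct $GL_4(\Z)$-conjugacy classes on $H^1$ yield non-isometric invariant sublattices of $\wedge^2$, which is not automatic.
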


We recall that this article is meant to deal as a reference list for the upcoming article \cite{MTW}.

The structure of the article is as follows: We begin by recalling basic facts on tori, their automorphisms and lattices in Section 2. Then we state the Torelli theorem due to Shioda and its consequences for the study of automorphisms of $2$-tori in Section 3. The lattice theoretic classification is performed in the non-symplectic case in Section 4 and in the symplectic case in Section 5. We finish the article by summarising some generalities on moduli spaces of manifolds with automorphisms in the final section.

\section*{Acknowledgements}
We would like to thank the Max Planck Institute in Mathematics, for hosting the three authors when this work was started. We are also grateful to Samuel Boissi\`ere and Bert Van Geemen for their comments and support.
\section{Preliminaries}\label{sec:prelim}
% I am not quite sure what we actually need. Definitely something about lattice theory. Some basics on abelian varieties and surfaces, e.g.\ examples such as direct products of elliptic curves and jacobians. Some basics on automorphisms and their induced action on cohomology.

\subsection{Generalities on Tori and Automorphisms}
Let us recall some basic facts about tori, we refer to \cite{BL92} for the details. Let $A$ be a complex torus of dimension 2. The whole cohomology of $A$ is obtained from the first cohomology group via $H^k(A,\Z)\cong\Lambda^k H^1(A,\Z)$. From this, one gets also $\Lambda^2 H^2(A,\Z)\cong H^4(A,\Z)\cong\Z$, which endows $H^2(A,\Z)$ with a lattice structure, i.e. a non-degenerate symmetric bilinear form with integer values. This lattice is known to be isomorphic to $U^{\oplus3}$, where $U$ is the hyperbolic plane, i.e.\ the rank $2$ lattice with intersection form \[ U=\left( \begin{array}{cc} 0 & 1 \\ 1 & 0\end{array}\right).\] We denote by $\langle.,.\rangle$ this bilinear form on $H^2(A,\Z)$, which is of signature $(3,3)$. The group $H^{2,0}(A)$ is 1-dimensional and generated by a nowhere vanishing holomorphic $2$-form $\omega$.

Denote by $\aut_0(A)$ the group of group automorphisms of $A$. These are exactly the automorphisms fixing the origin. There is a natural representation of groups to the orthogonal group of the lattice defined above:
\[\nu\colon\Aut_0(A)\rightarrow O(H^2(A,\Z))\]
whose kernel is $\{\pm\id_A\}$ by Lemma 2 of \cite[Section 1]{Shi78}. We will use this representation to analyse the automorphisms of $A$ from a lattice-theoretic point of view.

We want to study automorphisms of complex tori with a prime order action on $H^2(A,\Z)$. Thus we will study pairs $(A,G)$ where $A$ is a complex torus and $G$ is a subgroup of $\aut_0(A)$ such that $\nu(G)\cong \ZZ/p\ZZ$. It is more convenient to do so than studying only one automorphism at a time, because we will classify the action on $H^2(A,\Z)$ through the invariant lattice (see the following subsection). Moreover, since the action of $-\id_A$ on $H^2(A,\Z)$ is trivial, we will always assume that $-\id_A\in G$. If $\sigma\in\Aut_0(A)$ has an action of order $p$ on $H^2(A,\Z)$, we denote by $G_\sigma$ the subgroup of $\Aut_0(A)$ generated by $\sigma$ and $-\id_A$.

\begin{defn}
We call the group $G_\sigma$ {\em symplectic} if $\sigma$ preserves $\omega$, we call it {\em non-symplectic} otherwise.
\end{defn}

Let us study the possible orders for these actions and for the corresponding automorphisms: If $A=\C^2/\Lambda$ is a $2$-dimensional complex torus, every $\sigma\in\Aut_0(A)$ of finite order comes from an element of $\GL(\C^2)$ which induces an automorphism of $\Lambda$. Now, this can be seen as an element $g$ of $\GL_4(\Z)$ and since $g^d=\id$, the polynomial $X^d-1$ annihilates this matrix and it is diagonalizable over $\C$, with roots of the unity as eigenvalues. The characteristic polynomial is then of the form $(X-1)^k\prod\Phi_{n_i}(X)^{k_i}$, where $\Phi_n$ is the $n$-th cyclotomic polynomial. Looking at the degrees, we see that $k+\sum k_i\varphi(n_i)=4$ where $\varphi$ is Euler's totient function, and this gives a short list of possible orders for $g$: $1$, $2$, $3$, $4$, $5$, $6$, $8$, $10$ or $12$. We restrict ourselves to automorphisms with a prime order action on $H^2(A,\Z)$. This shortens the list to $2$, $3$, $4$, $5$, $6$ or $10$, with an action on $H^2(A,\Z)$ of order $2$, $3$, or $5$.

\subsection{Special sublattices of $H^2(A,\Z)$}
Inside $H^2(A,\Z)$, there are several interesting sublattices to consider. First, the N\'eron-Severi lattice of $A$ is defined by:
\[\NS(A)=\{x\in H^2(A,\Z)\mid\langle x,\omega\rangle =0\}\]
and the transcendental lattice is $T_A=\NS(A)^\perp$. The Picard number of $A$ is $\rho(A):=\rk(\NS(A))$. We will study groups of automorphisms through their action on $H^2(A,\Z)$, first by finding all lattices that can occur as invariant lattices of such an action. We denote by $T(G_\sigma)$ the invariant lattice, i.e.\
\[T(G_\sigma):=\{ x\in H^2(A,\Z)\mid g^*x=x\quad\forall g\in G_\sigma\}\] and $S(G_\sigma):=T(G_\sigma)^\perp$ the orthogonal of the invariant lattice of an automorphism group $G_\sigma$.

%\begin{oss}
%The reason why we study separately symplectic and non-symplectic automorphism groups is that the previous lattices are related differently in those cases, as can be easily seen as follows: the invariant lattice of a symplectic automorphism always contains the transcendental lattice, therefore the coinvariant is contained in the Neron-Severi lattice $\NS(A)$. On the contrary, for non-symplectic automorphisms the invariant lattice is contained in $\NS(A)$ and the coinvariant contains $T_A$.
%\end{oss}

% During the process of classification, some questions of unicity of the embedding of lattices inside $H^2(A,\Z)$ will arise. We will use the following theorem to solve these questions, which is Corollary 1.14.4 of \cite{Nik80}

\subsection{$p$-elementary lattices}
Let $p$ be a prime number. For a lattice $L$, we define the dual lattice $L^\ast:=\Hom(L,\Z)$, the discriminant group $A_L:=L^\ast/L$ and the discriminant $d_L:=|A_L|$. This group inherits a symmetric bilinear form with values in $\mathbb{Q}/\mathbb{Z}$ ($\mathbb{Q}/2\mathbb{Z}$ for even lattices) from that on $L^\ast$, which is called discriminant form and we denote it by $q_L$. We say that $L$ is unimodular if $L^\ast=L$ (or $A_L=\{0\}$), and $p$-elementary if $A_L\cong(\Z/p\Z)^a$ for some integer $a$ called the {\em length} of $A$ satisfying $a\leq\rk(L)$. The signature of a lattice is the signature of the associated real bilinear form and we say that a lattice is hyperbolic if it has signature $(1,b)$ for some $b>0$. If $L$ is a sublattice of a unimodular lattice, then $A_L\cong A_{L^\perp}$ and $q_{L^{\perp}}=-q_L$. Two lattices are said to be of the same genus if they have the same rank, signature and isometric discriminant forms. 

Since $H^2(A,\Z)\cong U^{\oplus3}$ is unimodular, we have $A_{T(G_\sigma)}\cong A_{S(G_\sigma)}$ for any automorphism $\sigma$. When $\sigma$ has an action of prime order $p$ on $H^2(A,\Z)$, the coinvariant lattice is $p$-elementary, and then so is the invariant lattice.

The following theorems deal with the questions of existence and unicity of $p$-elementary lattices. The first one deals with the case $p=2$ and is Theorem 4.3.2 of \cite{Nik83}. Let $r$ be the rank of a $2$-elementary lattice $L$, then its discriminant group is of the form $(\Z/2\Z)^a$. Let $\delta$ be 0 if $q_L(s,s)$ is integer for any element $s$ of $A_L$, let $\delta$ be 1 otherwise.

\begin{thm}[\cite{Nik83}]\label{thm:2el}
An even hyperbolic $2$-elementary lattice of rank $r$ is uniquely determined by the invariants $(r,a,\delta)$ and exists if and only if the following conditions are satisfied:
$$\left\{
\begin{array}{l l}
a\leq r\\
r\equiv a\mod{2}\\
\text{if }\delta=0\text{, then }r\equiv 2\mod{4}\\
\text{if }a=0\text{, then }\delta=0\\
\text{if }a\leq 1\text{, then }r\equiv 2\pm a\mod{8}\\
\text{if }a=2\text{ and }r\equiv 6\mod{8}\text{, then }\delta=0\\
\text{if }\delta=0\text{ and }a=r\text{, then }r\equiv 2\mod{8}
\end{array}
\right.$$
\end{thm}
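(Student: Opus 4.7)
The plan is to prove the theorem in three stages: uniqueness of a 2-elementary lattice in its genus, existence of a lattice realising each admissible triple, and necessity of the seven congruence conditions. Throughout, the main technical tool is Nikulin's general theory of discriminant forms for even lattices.

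For uniqueness, I would invoke Nikulin's general criterion: an even indefinite lattice $L$ is the unique element of its genus as soon as $\mathrm{rk}(L)\geq \ell(A_L)+2$, where $\ell$ denotes the length of the discriminant group. For a $2$-elementary hyperbolic lattice of rank $r$ and length $a$ this gives uniqueness in the range $r\geq a+2$. The residual border cases $r=a$ and $r=a+1$ must be handled directly on explicit models, and it is precisely these edge cases that force the last three congruence conditions appearing in the statement. Once uniqueness in the genus is granted, the theorem reduces to verifying that the genus of a hyperbolic $2$-elementary lattice is determined by $(r,a,\delta)$: the signature is $(1,r-1)$ by hyperbolicity, while the discriminant form on $(\Z/2)^a$ with values in $\Q/2\Z$ is classified up to isomorphism by $a$ and $\delta$. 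This last step is a finite calculation with the elementary quadratic blocks $u_+(2)$, $v_+(2)$, and $\langle\pm 1/2\rangle$ modulo the standard relations $v_+(2)^{\oplus 2}\cong u_+(2)^{\oplus 2}$ and $\langle 1/2\rangle\oplus\langle -1/2\rangle\cong u_+(2)$.

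For existence, I would exhibit, for every admissible triple $(r,a,\delta)$, an orthogonal sum of the elementary blocks $U$, $U(2)$, $\langle 2\rangle$, $\langle -2\rangle$, $D_4$, $E_8$, $E_8(2)$ of the prescribed invariants. Typical realisations are $U\oplus U(2)^{\oplus k}\oplus E_8^{\oplus m}$ for families with $\delta=0$ and small $a$, $\langle 2\rangle\oplus\langle -2\rangle^{\oplus r-1}$ for the case $a=r$, $\delta=1$, and $U(2)\oplus E_8(2)$ for $(r,a,\delta)=(10,10,0)$. Running through the seven admissible ranges of invariants produces matching lattices in each case.

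For necessity of the congruences, the bound $a\leq r$ follows from the fact that $A_L$ is generated by images of any basis of $L$. The parity $r\equiv a\pmod 2$ comes from comparing $|\det L|=2^a$ with $\det L\equiv(-1)^{r_-}\pmod 4$ for even lattices of signature $(r_+,r_-)$. The sharper conditions modulo $4$ and $8$ all follow from Milgram's formula
\[\sum_{x\in A_L}\exp\!\bigl(\pi i\, q_L(x)\bigr) \;=\; \sqrt{|A_L|}\,\exp\!\bigl(\pi i\,\mathrm{sign}(L)/4\bigr),\]
whose left-hand side is a Gauss sum depending only on the isomorphism class of $q_L$, i.e.\ on $(a,\delta)$, while the right-hand side detects $r\pmod 8$. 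Tabulating the Gauss sums for the finitely many $2$-elementary discriminant forms yields exactly the listed restrictions.

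The main obstacle is the careful bookkeeping of the border cases $r=a$ and $r=a+1$, where Nikulin's abstract genus uniqueness does not apply: one must either argue directly that the genus is a singleton, or show that it splits into non-isometric classes that are separated by one of the extra congruences. Reconciling the abstract discriminant-form picture with explicit lattice constructions in these borderline ranges is what produces the last three clauses of the theorem.
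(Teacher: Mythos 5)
The first thing to note is that the paper does not prove this statement at all: it quotes it verbatim as Theorem 4.3.2 of \cite{Nik83}, so your sketch must be measured against Nikulin's original argument, whose overall architecture (genus determined by signature plus discriminant form; uniqueness in the genus for even indefinite lattices; existence via explicit block sums; congruences via Milgram) you do reproduce correctly. However, the central step of your uniqueness argument contains a genuine error: $2$-elementary finite quadratic forms are \emph{not} classified by $(a,\delta)$. The forms $u(2)$ and $v(2)$ (the discriminant forms of $U(2)$ and of $D_4$) both have $a=2$ and $\delta=0$ but have signatures $0$ and $4$ modulo $8$ and are not isomorphic; likewise $\langle 1/2\rangle\oplus\langle 1/2\rangle$ and $\langle 1/2\rangle\oplus\langle -1/2\rangle$ share $(a,\delta)=(2,1)$. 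The complete invariant is the triple $(a,\delta,\sigma\bmod 8)$, where $\sigma$ is the signature of the form; the reason $(r,a,\delta)$ nevertheless determine the lattice is that Milgram's formula --- which you do state correctly --- forces $\sigma(q_L)\equiv 2-r\pmod 8$, so the rank supplies the missing third invariant. As written, your reduction of uniqueness to ``the form is classified by $a$ and $\delta$'' is a gap, and the ``standard relation'' $\langle 1/2\rangle\oplus\langle -1/2\rangle\cong u_+(2)$ you invoke to justify it is false: integrality of all values of $q$ (i.e.\ $\delta=0$) is itself an isomorphism invariant, the left-hand side takes the value $1/2$, and $u(2)$ is integer-valued. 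Valid relations, such as $u(2)^{\oplus 2}\cong v(2)^{\oplus 2}$ and $\langle 1/2\rangle^{\oplus 4}\cong\langle -1/2\rangle^{\oplus 4}$, always preserve $\delta$.

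There are three further concrete slips. Your parity argument rests on ``$\det L\equiv(-1)^{r_-}\pmod 4$ for even lattices'', which is false ($A_2$ has determinant $3$; $\langle 2\rangle\oplus\langle 2\rangle$ has determinant $4$); the clean proof of $r\equiv a\pmod 2$ is the $2$-adic Jordan splitting $L\otimes\Z_2\cong L_0\oplus L_1(2)$ with $L_0,L_1$ unimodular and $\rk(L_1)=a$, where evenness of $L$ forces $L_0$ to be even over $\Z_2$, hence of even rank. Your model $U\oplus U(2)^{\oplus k}\oplus E_8^{\oplus m}$ has signature $(1+k+8m,\,1+k)$ and so is hyperbolic only for $k=m=0$; beyond a single copy of $U$ or $U(2)$ the building blocks must be taken negative definite, e.g.\ $\langle -2\rangle$, $D_4(-1)$, $E_8(-1)$, $E_8(-2)$. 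Finally, you attribute the last three clauses of the theorem to failure of genus-uniqueness in the border cases $r\in\{a,a+1\}$, but these clauses are existence constraints and follow from Milgram together with the form classification: for instance $\delta=0$ forces $q_L\cong u(2)^{\oplus k}\oplus v(2)^{\oplus l}$, whose signature is $0$ or $4$ modulo $8$, whence $r\equiv 2\pmod 4$; and if $a=r$ and $\delta=0$ then $L\cong M(2)$ with $M$ even unimodular of signature $(1,r-1)$, whence $r\equiv 2\pmod 8$. Uniqueness in those border ranges does hold, but by Nikulin's refined $2$-adic criteria (\cite{Nik80}, Theorems 1.14.2 and 3.6.2) rather than by the case-by-case inspection you propose, and it produces no new congruences.
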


The second theorem deals with the case $p\neq 2$:

\begin{thm}[{\cite[Section 1]{RS81}}]\label{thm:pel}
An even hyperbolic $p$-elementary lattice of rank $r$ with $p\neq 2$ with invariants $(r,a)$ exists if and only if the following conditions are satisfied:
$$\left\{
\begin{array}{l l}
a\leq r\\
r\equiv 0\mod{2}\\
\text{if }a\equiv 0\mod{2}\text{, then }r\equiv 2\mod{4}\\
\text{if }a\equiv 1\mod{2}\text{, then }p\equiv (-1)^{r/2-1}\mod{4}\\
\text{if }r\not\equiv 2\mod{8}\text{, then }r>a>0
\end{array}
\right.$$
Such a lattice is uniquely determined by the invariants $(r,a)$ if $r\geq 3$.
\end{thm}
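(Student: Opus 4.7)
The result fits into Nikulin's classification of even lattices through their genus, and the plan is to reduce the statement to a combination of his existence/uniqueness theorems together with Milgram's formula. The key input is that an indefinite even lattice $L$ with $r\geq a+2$ is determined up to isometry by its signature and its discriminant form $q_L$ on $A_L$; in particular, two such lattices with equal signatures and isomorphic discriminant forms are isometric.

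The first step is to classify the possible discriminant forms. Since $p$ is odd, $2$ is a unit modulo $p$, and a non-degenerate quadratic form on $(\Z/p\Z)^{a}$ with values in $\Q/2\Z$ is equivalent data to a non-degenerate $\mathbb{F}_p$-valued quadratic form of dimension $a$. Over $\mathbb{F}_p$ such forms are classified by their dimension together with the class of the determinant in $\mathbb{F}_p^{*}/(\mathbb{F}_p^{*})^{2}$, so for each $a\geq 1$ there are exactly two isomorphism classes, which I denote $q_p^{\pm,a}$; for $a=0$ the form is trivial and the lattice is unimodular.

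The second step applies Milgram's formula
\[\sum_{x\in A_L}\exp(\pi i q_L(x))=|A_L|^{1/2}\exp\!\bigl(\pi i (s_+-s_-)/4\bigr),\]
which constrains the signature modulo $8$. A direct Gauss-sum computation yields, for $q_p^{\pm,a}$, the value $\pm p^{a/2}$ when $p\equiv 1 \mod 4$ and $\pm i^{a}p^{a/2}$ when $p\equiv 3 \mod 4$. Substituting $s_+-s_-=2-r$ (hyperbolic of rank $r$) and matching phases yields: $r$ must be even; if $a$ is even then $r\equiv 2 \mod 4$; and if $a$ is odd then $p\equiv (-1)^{r/2-1} \mod 4$. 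Moreover, at each admissible $(r,a)$ exactly one of the two forms $q_p^{\pm,a}$ is compatible with being hyperbolic, so the pair $(r,a)$ alone determines the discriminant form, which is what makes $(r,a)$ suffice in the uniqueness statement.

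The final step is to invoke Nikulin's existence and uniqueness theorems for indefinite even lattices in a prescribed genus. Once the Milgram compatibility above holds together with $r\geq a$, existence is granted; uniqueness follows once $r\geq a+2$, which in particular holds whenever $r\geq 3$ since one simultaneously forces $a\leq r-2$. The boundary case $r=a$ (and the unimodular case $a=0$) must be handled directly: in both, Milgram's formula forces the signature condition $r\equiv 2 \mod 8$, which is precisely the final bullet ``if $r\not\equiv 2 \mod 8$, then $r>a>0$''. The main technical obstacle is the Gauss-sum bookkeeping of the second step, especially the splitting into the cases $p\equiv 1$ and $p\equiv 3 \mod 4$ needed to pin down the modular conditions exactly; the rest is formal once Nikulin's genus theory and Milgram's formula are in hand.
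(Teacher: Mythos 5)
The paper itself offers no proof of this statement---it is quoted directly from Rudakov--Shafarevich---so your proposal can only be measured against the standard genus-theoretic argument in the literature, and your overall strategy (classify the $p$-elementary discriminant forms over $\mathbb{F}_p$ by dimension and determinant class, pin down the admissible signatures and the determinant class via Milgram's formula, then invoke Nikulin's existence and uniqueness theorems for indefinite even lattices) is indeed that argument in outline. Your Gauss-sum bookkeeping is correct: with $s_+-s_-=2-r$ one gets $r$ even, the condition $r\equiv 2 \pmod 4$ for $a$ even, the condition $p\equiv(-1)^{r/2-1}\pmod 4$ for $a$ odd, and the fact that the hyperbolic signature singles out exactly one of the two classes $q_p^{\pm,a}$, so that $(r,a)$ determines the genus.

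There are, however, two genuine gaps at the boundary cases. First, your uniqueness argument rests on the claim that $r\geq 3$ forces $a\leq r-2$; this is false. The existence conditions permit $a=r-1$ (for instance $(r,a)=(4,3)$ with $p\equiv 3\pmod 4$, realised for $p=3$ by $U(3)\oplus A_2(-1)$) and even $a=r$ when $r\equiv 2\pmod 8$ (for instance $(10,10)$, realised by $(U\oplus E_8(-1))(p)$), and in these cases Nikulin's criterion $r\geq a+2$ simply does not apply. The standard repair is the duality trick: for odd $p$ the assignment $L\mapsto L^{\ast}(p)$ is an involution on even $p$-elementary lattices preserving the signature and exchanging the invariants $(r,a)$ and $(r,r-a)$, which reduces $a\in\{r-1,r\}$ to $a'\leq 1\leq r-2$ once $r\geq 3$. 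Second, your treatment of the existence boundary $a=r$ is wrong: Milgram's formula does \emph{not} force $r\equiv 2\pmod 8$ there. Take $(r,a)=(6,6)$ with $p\equiv 1\pmod 4$: both candidate forms have real Gauss sums $\pm p^{3}$, and the class with sum $-p^{3}$ matches the required phase $e^{\pi i(2-6)/4}=-1$, so Milgram is satisfied---yet no such lattice exists. The correct argument is that $a=r$ forces $\langle x,y\rangle\in p\Z$ for all $x,y\in L$, hence $L\cong M(p)$ with $M$ even unimodular of signature $(1,r-1)$, which exists iff $r\equiv 2\pmod 8$; equivalently one must invoke the extra condition at odd primes in Nikulin's existence theorem when the rank equals the length, which is strictly stronger than the signature condition. (For $a=0$ your Milgram argument does suffice.) Note that in the paper's actual application the lattices embed in $U^{\oplus 3}$, so $a\leq\min(r,6-r)$ and $r\geq 3$ does give $a\leq r-2$; your argument covers that range, but not the theorem as stated.
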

The question of unicity of lattices is refined by the following proposition, which is part of Corollary 22 in section 15.9.7 of \cite{CS99}:

\begin{prop}\label{prop:CS99}
An indefinite lattice of rank $2$ and discriminant $d$ with more than one class in its genus has $d\geq 17$.
\end{prop}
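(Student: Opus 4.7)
The plan is to prove Proposition \Ref{prop}{CS99} by reducing it to Gauss's classical theory of binary quadratic forms and then performing a finite verification. The first step is to identify an integral rank $2$ lattice $L$ of signature $(1,1)$ and discriminant $d$ with an indefinite integral binary quadratic form $Q_L$ read off from its Gram matrix, so that two such lattices are isomorphic if and only if the associated forms are $GL_2(\Z)$-equivalent. Under this dictionary, the genus of $L$ in the lattice-theoretic sense (rank, signature, and discriminant form) corresponds to the Gaussian genus of $Q_L$, which is determined by the $\Z_p$-equivalence class of $Q_L$ at every prime $p \mid 2d$.

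Next I would use Gauss's genus theory to control the fibres of the map from classes to genera. Classically, the set of proper equivalence classes of primitive integral forms of discriminant $D$ (where $D$ is $d$ or $4d$ according to parity) is an abelian group $C(D)$, namely the narrow class group of the associated real quadratic order, and the subgroup of squares $C(D)^2$ acts simply transitively on the classes inside each genus. Hence the number of classes per genus equals $|C(D)^2|$, and Proposition \Ref{prop}{CS99} reduces to the assertion that $C(D)$ has exponent at most $2$ for every $d \leq 16$.

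The concrete verification is a finite enumeration. For each $d \in \{1,\dots,16\}$, I would compute $C(D)$ by listing the reduced indefinite forms and their cycles under $SL_2(\Z)$ — equivalently via the periodic continued-fraction expansions of the associated quadratic irrationals — and would compute $|C(D)/C(D)^2|$ by evaluating Gauss's genus characters. In each case the two numbers coincide, so every class is alone in its genus. The passage from proper to improper equivalence (i.e.\ from $SL_2(\Z)$ to $GL_2(\Z)$) can only merge classes, not split genera, so the conclusion transfers directly to the non-oriented lattice setting.

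The main obstacle is bookkeeping rather than theory. The $2$-adic analysis in the cases $d \in \{4,8,12,16\}$ needs the finer invariant $\delta$ appearing in Theorem \Ref{thm}{2el}, and one must keep straight the even/odd dichotomy of forms and the two conventions for discriminant. In practice one simply invokes the tabulation in Corollary $22$ of Section $15.9.7$ of \cite{CS99}, where the one-class-per-genus property for small discriminants is recorded explicitly, together with the first counterexamples appearing at higher discriminant.
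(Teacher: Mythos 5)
Your proposal is correct, and in the end it rests on exactly the same foundation as the paper: the paper offers no argument of its own for \Ref{prop}{CS99}, it simply cites Corollary 22 of \cite[Section 15.9.7]{CS99}, which is also where your verification finally lands. What you add, and the paper omits, is the classical skeleton behind that tabulation: the dictionary between rank-$2$ lattices and binary quadratic forms, Gauss's principal genus theorem identifying the classes in a genus with a coset of $C(D)^2$, and the finite check that $C(D)$ has exponent at most $2$ for the relevant discriminants $D\leq 64$. That outline is sound, but it needs two patches to be self-contained. First, Gauss's theory applies to primitive forms, so imprimitive lattices must be rescaled before the dictionary applies (harmless: rescaling preserves the class--genus correspondence and only lowers the discriminant). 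Second, and more substantively, your description of $C(D)$ as the narrow class group of a real quadratic order, and the continued-fraction reduction you propose for the enumeration, are only valid for non-square $D$; the isotropic lattices have square discriminant, and these are not an exotic fringe case here --- of the five rank-$2$ lattices in the paper's table, four ($U$, $U(2)$, $\langle2\rangle\oplus\langle-2\rangle$, $U(3)$) are exactly of this degenerate type, with only $H_5$ falling under the quadratic-order theory. Those finitely many square-discriminant cases admit an easy direct classification (primitive forms of discriminant $k^2$ are parametrised by $(\Z/k\Z)^*/\{\pm1\}$, and one checks one class per genus for $k\leq 8$), but as written your argument does not cover them. Since you ultimately defer to the Conway--Sloane tabulation, which does cover all cases and is precisely the paper's proof, none of this invalidates your proposal; it only means the ``elementary'' route you sketch is incomplete without the degenerate-case supplement.
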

Here the genus of a lattice is the set of isometry classes of even lattices with the same signature and discriminant form.

Finally, in order to narrow the list of possible coinvariant lattices, we will use the following proposition, that is proven in section 4 of \cite{BCMS14}:

\begin{prop}\label{prop:BCMS14}
Let $L$ be a lattice with a non-trivial action of order $p$, with rank $p-1$ and discriminant $d_L$, then $\frac{d_L}{p^{p-2}}$ is a square in $\Q$.
\end{prop}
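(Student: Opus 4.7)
The plan is to pin down $T(G_\sigma)$ via a rank computation followed by the classification of $p$-elementary hyperbolic lattices. The lattice $T = T(G_\sigma)$ has three structural features: $(i)$ it is even, as a sublattice of $H^2(A,\Z) \cong U^{\oplus 3}$; $(ii)$ it is $p$-elementary with length $a$ satisfying $a \leq \min(r, 6-r)$, as follows from the preceding subsection (where $S(G_\sigma)$ is shown to be $p$-elementary) together with the isomorphism $A_T \cong A_{S(G_\sigma)}$ coming from unimodularity of $H^2(A,\Z)$; and $(iii)$ it is hyperbolic of signature $(1, r-1)$, because in the non-symplectic case every $\sigma$-invariant integer class must lie in $H^{1,1}(A)$ (any $(2,0)$-part is a multiple of $\omega$, which is not $\sigma$-invariant), so $T \subseteq \NS(A)$, while existence of an invariant polarisation in the abelian-surface setting provides exactly one positive direction.

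For the rank, I would exploit that $\sigma^*$ preserves the Hodge decomposition $H^1(A,\C) = H^{1,0} \oplus H^{0,1}$, with eigenvalues on $H^{0,1}$ complex-conjugate to those on $H^{1,0}$. The eigenvalues of $\sigma^*|_{H^{1,0}}$ are $d$-th roots of unity for $d = \mathrm{ord}(\sigma) \in \{2,3,4,5,6,10\}$ (see \Ref{sec}{prelim}), so only finitely many eigenvalue patterns arise. I enumerate them, discard the symplectic ones (those with $\det \sigma^*|_{H^{1,0}} = 1$), and compute $r = \rk T$ as the number of pairs of $H^1$-eigenvalues whose product equals $1$ under the identification $H^2(A,\Q) \cong \Lambda^2 H^1(A,\Q)$. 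A short case analysis yields $r \in \{2, 4\}$ for $p \in \{2,3\}$ and $r = 2$ for $p = 5$.

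With $(r, p)$ known, \Ref{thm}{2el} for $p = 2$ and \Ref{thm}{pel} for $p \in \{3,5\}$, combined with the length bound, reduce the admissible invariants to a short list. For $p = 2$, $r = 2$ one obtains $(r, a, \delta) \in \{(2,0,0),(2,2,0),(2,2,1)\}$, corresponding to $U$, $U(2)$, $\lat{2}\oplus\lat{-2}$; for $p = 2$, $r = 4$ only $(4,2,1)$ survives, giving $U \oplus \lat{-2}^{\oplus 2}$; for $p = 3$, $r = 2$ the pairs $(2,0)$ and $(2,2)$ give $U$ and $U(3)$; for $p = 3$, $r = 4$, the parity constraint $a \equiv 1 \pmod{2}$ in \Ref{thm}{pel} together with $a \leq 2$ forces $a = 1$, i.e.\ $U \oplus A_2(-1)$; and for $p = 5$, $r = 2$, applying \Ref{prop}{BCMS14} to the rank-$4$ coinvariant lattice leaves only $a = 1$ (the requirement that $d_S/125$ be a rational square eliminates $a = 0$ and $a = 2$), giving $H_5$. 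Uniqueness within each rank-$2$ genus follows from \Ref{prop}{CS99}, since all discriminants that arise are $\leq 9 < 17$.

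The main obstacle is the $p = 2$, $r = 2$ case, where three genuinely distinct lattices $U$, $U(2)$, $\lat{2}\oplus\lat{-2}$ all pass every lattice-theoretic constraint and must be distinguished and realised separately. Since $U(2)$ and $\lat{2}\oplus\lat{-2}$ share the invariants $(r, a) = (2, 2)$, they are told apart only by the Nikulin invariant $\delta$; computing $\delta$ requires an explicit realisation of each as a sublattice of $U^{\oplus 3}$ arising from a concrete non-symplectic involution of a $2$-torus, followed by reading off the diagonal of the discriminant form. Producing representative abelian surfaces (products or isogenies of elliptic curves with suitable period matrices, including CM models for the rigid rows) row by row simultaneously confirms realisability and yields the dimension count in the table.
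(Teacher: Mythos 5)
Your proposal does not address the statement at hand. Proposition \Ref{prop}{BCMS14} is a self-contained lattice-theoretic assertion --- for any lattice $L$ of rank $p-1$ carrying a non-trivial isometry of order $p$, the quotient $d_L/p^{p-2}$ is a rational square --- and makes no reference to tori, to $H^2(A,\Z)$, or to non-symplectic automorphisms. What you have written is instead a (reasonable) sketch of the proof of the classification result \Ref{prop}{list}: you compute ranks of invariant lattices from eigenvalue patterns on $H^1$, feed the invariants into \Ref{thm}{2el} and \Ref{thm}{pel}, and realise each row of the table geometrically. Worse, in the $p=5$ step you explicitly \emph{apply} \Ref{prop}{BCMS14} to eliminate $a=0$ and $a=2$; so the statement you were asked to prove enters your argument only as a black box, and nothing in the proposal establishes it. (For comparison, the paper itself offers no proof either: it cites Section 4 of \cite{BCMS14}.)

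A genuine proof runs along the following lines. Since the isometry $\varphi$ is non-trivial of finite order $p$ on a lattice of rank $p-1=\varphi(p)$ (Euler totient), its characteristic polynomial, a product of cyclotomic polynomials dividing $X^p-1$, must be exactly $\Phi_p(X)$; hence $L$ becomes a torsion-free rank-one module over $\Z[\zeta_p]$, i.e.\ (after tensoring with $\Q$) a fractional ideal $I$ of $K=\Q(\zeta_p)$, and the bilinear form is a twisted trace form $(x,y)\mapsto \mathrm{Tr}_{K/\Q}(\alpha x\bar y)$ with $\alpha$ in the maximal totally real subfield $K^+$. The determinant of such a form is, up to sign, $d_K\cdot N_{K/\Q}(\alpha)\cdot N(I)^2$, where $d_K=(-1)^{(p-1)/2}p^{p-2}$ is the discriminant of $K$; since $\alpha\in K^+$ gives $N_{K/\Q}(\alpha)=N_{K^+/\Q}(\alpha)^2$, everything except the factor $p^{p-2}$ is a square in $\Q$, which is precisely the claim. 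If you want to keep your classification sketch as a proof of \Ref{prop}{list}, it stands on its own merits, but this ideal-lattice computation is the missing content for the proposition actually posed.
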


We end this preliminary section with introducing two special lattices that will occur later. Firstly we denote by $A_2$ the unique rank $2$, positive definite, $3$-elementary lattice with $a=1$, i.e.\
\[ A_2:=\left( \begin{array}{cc} 2 & 1 \\ 1 & 2\end{array}\right).\]

Secondly we denote by $H_5$ the lattice
\[H_5:=\left( \begin{array}{cc} 2 & 1 \\ 1 & -2\end{array}\right),\]
which is the unique hyperbolic, rank $2$, $5$-elementary lattice with $a=1$.
\section{The Torelli Theorem by Shioda}
In the paper \cite{Shi78} Shioda studied the periods of complex 2-tori. Here we will review his results and deduce some basic results about families of automorphisms.

Let $A$ be a complex 2-torus. Let $\sigma$ be an automorphism of $A$. It induces an action on the integral cohomologies:
\[\psi:=\sigma^*\in O(H^1(A,\Z)), \quad \varphi:=\Lambda^2\psi\in O(H^2(A,\Z)).\]
Note that, in fact, $\varphi$ is an element of the subgroup of Hodge isometries and its determinant is $1$. Furthermore the cone $\mc{C}$ of positive classes in $H^2(A,\R)$ comes with a distinguished orientation, which is the choice of a connected component, (cf.\ \cite[Section 4]{Mar11}) and $\varphi$ above preserves this orientation. Conversely we have:

\begin{thm}[{\cite[Theorem 1]{Shi78}}]\label{thm:shio}
Let $\varphi\in O(H^2(A,\Z))$ be an orientation preserving Hodge isometry of determinant $1$, then there exists an automorphism $\sigma$ of $A$ such that $\varphi=\Lambda^2\sigma^*$.
\end{thm}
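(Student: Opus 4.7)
The strategy is to invert the construction $\sigma\mapsto\Lambda^2\sigma^*$. Since $H^1(A,\Z)$ has rank $4$ and $H^2(A,\Z)=\Lambda^2 H^1(A,\Z)$, the second exterior power gives a homomorphism
\[ \Lambda^2\colon SL(H^1(A,\Z))\longrightarrow O(H^2(A,\Z)) \]
whose kernel is $\{\pm\id\}$ and whose image is contained in the subgroup $SO^+$ of orientation-preserving, determinant-one isometries; this is a manifestation over $\Z$ of the exceptional isomorphism $\mathrm{Spin}(3,3)\simeq SL_4$. The first step is to use this identification to lift $\varphi$ to some $\psi\in SL(H^1(A,\Z))$, uniquely determined up to a sign.

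Second, one checks that any such lift $\psi$ automatically preserves the Hodge structure on $H^1(A,\Z)$. By hypothesis $\varphi$ is a Hodge isometry, so it preserves the one-dimensional subspace $H^{2,0}(A)=\Lambda^2 H^{1,0}(A)$ of $H^2(A,\C)$. Consequently the $\C$-linear extension $\psi_{\C}$ sends the $2$-plane $H^{1,0}(A)\subset H^1(A,\C)$ to a $2$-plane with the same image under the Plücker embedding $\mathrm{Gr}(2,4)\hookrightarrow\PP(\Lambda^2\C^4)$; by injectivity of Plücker, $\psi_{\C}(H^{1,0}(A))=H^{1,0}(A)$, and complex conjugation takes care of $H^{0,1}(A)$.

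Third, writing $A=V/\Lambda$ with $V\simeq H^{1,0}(A)^*$ and $\Lambda\simeq H_1(A,\Z)$, the dual of $\psi$ defines a $\Z$-linear automorphism of $\Lambda$ whose $\R$-linear extension is $\C$-linear with respect to the complex structure of $V$ (by the previous step). It therefore descends to a group automorphism $\sigma\in\Aut_0(A)$ with $\sigma^*=\psi$ on $H^1(A,\Z)$, and hence $\Lambda^2\sigma^*=\varphi$. The main obstacle is concentrated in the very first step: one has to verify that every integral element of $SO^+(H^2(A,\Z))$ admits an \emph{integral}, not merely rational, lift to $SL(H^1(A,\Z))$. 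Over $\R$ the surjection is the classical exceptional isomorphism, but its integral version is nontrivial and constitutes the technical heart of Shioda's argument.
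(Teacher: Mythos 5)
Your outline faithfully reconstructs the architecture of Shioda's original argument --- note that the paper itself contains no proof of \Ref{thm}{shio}, which is quoted directly from \cite[Theorem 1]{Shi78}, so the comparison must be with Shioda's proof --- and your second and third steps are correct and essentially routine: the Plücker quadric in $\PP(\Lambda^2 H^1(A,\C))$ is cut out by the intersection form itself (for a $2$-torus $\langle x,x\rangle=0$ is exactly the decomposability condition $x\wedge x=0$), so injectivity of the Plücker embedding does force $\psi_\C(H^{1,0}(A))=H^{1,0}(A)$ for any lift $\psi$ of the Hodge isometry $\varphi$; dualising such a $\psi$ then gives an automorphism of $H_1(A,\Z)$ whose $\R$-linear extension commutes with the complex structure, hence a $\sigma\in\Aut_0(A)$ with $\Lambda^2\sigma^*=\varphi$, and the residual sign ambiguity $\pm\psi$ merely exchanges $\sigma$ and $-\id_A\circ\sigma$, consistently with $\ker\nu=\{\pm\id_A\}$. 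The genuine gap is the one you flag yourself and then skip: step one asserts, without argument, that the image of $\Lambda^2\colon SL(H^1(A,\Z))\to O(H^2(A,\Z))$ is \emph{all} of the group $SO^+$ of orientation-preserving integral isometries of determinant one, whereas what you actually establish is only the containment. Since your steps two and three are soft, this surjectivity \emph{is} the theorem; a proof that reduces the statement to it proves nothing new. Nor can the gap be waved away by the exceptional isomorphism $\mathrm{Spin}(3,3)\simeq SL_4(\R)$: surjectivity over $\R$ (or $\Q$) is blind to the difference between $SO^+(U^{\oplus3})$ and any finite-index subgroup, and for integral points of a quotient by $\{\pm1\}$ there is a genuine cohomological obstruction to lifting, so a priori the image of $SL_4(\Z)$ could be a proper arithmetic subgroup. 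Ruling this out is exactly the content of Shioda's Theorem 1.

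To close the gap one must argue integrally, and a standard route is as follows. First, every \emph{primitive} isotropic vector $e\in\Lambda^2 H^1(A,\Z)\cong U^{\oplus3}$ is decomposable: over $\Q$ one has $e=u\wedge v$, and saturating the plane $\lat{u,v}\cap H^1(A,\Z)$ yields a rank-two direct summand $M$ with $\Lambda^2M=\Z e$. Next, since $U^{\oplus3}$ is even unimodular and contains $U\oplus U$, Eichler's criterion shows that $SO^+(U^{\oplus3})$ is generated by the Eichler transvections $t_{e,a}$ with $e$ primitive isotropic and $a\in e^\perp$ (unimodularity makes the usual discriminant-group condition vacuous). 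Finally, after moving $e$ to $e_1\wedge e_2$, each $t_{e,a}$ is checked by direct computation to equal $\Lambda^2\psi$ for an explicit unipotent $\psi\in SL(H^1(A,\Z))$ preserving the flag $\lat{e_1,e_2}\subset H^1(A,\Z)$ --- the transvections $t_{e_1\wedge e_2,a}$ are precisely the images of the unipotent radical of the corresponding parabolic of $SL_4(\Z)$. Supplying this lemma (or citing the corresponding lemma in Section 1 of \cite{Shi78}) turns your outline into a complete proof; as written, it is a correct reduction, not a proof.
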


In the language of \hk manifolds, we call monodromy operator any element of $O(H^2(A,\Z))$ which can be obtained by parallel transport along a family of smooth deformations of $A$. The content of the above Torelli theorem can be rephrased as follows.

\begin{thm}\label{thm:shio2}
Let $A$ be a complex 2-torus. Then the group of monodromy operators coincides with the group of orientation preserving isometries. The group of monodromy Hodge isometries is the image of the representation $\nu\colon Aut_0(A)\rightarrow O(H^2(A,\mathbb{Z}))$.
\end{thm}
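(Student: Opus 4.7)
The plan is to deduce both assertions from \Ref{thm}{shio} (Shioda's Torelli theorem) combined with the topological fact that the moduli space of marked 2-tori with fixed orientation is connected. I would first identify the monodromy group with $O^+(H^2(A,\Z))$, the subgroup of orientation-preserving isometries, and then derive the statement on monodromy Hodge isometries as a formal consequence.

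For the first identification, the inclusion of the monodromy group in $O^+(H^2(A,\Z))$ is immediate: the distinguished orientation on the positive cone $\mc{C}$ is a locally constant invariant of a smooth family of 2-tori, hence is preserved by every parallel transport operator. For the reverse, the plan is to realise each $\varphi \in O^+(H^2(A,\Z))$ as parallel transport along a suitable path. Denote by $\tilde{\mc{M}}^+$ the moduli space of marked complex 2-tori carrying a compatible orientation; its connectedness, which is the essential input, is a standard consequence of the surjectivity of the period map for complex 2-tori together with the connectedness of the corresponding period domain, both established in \cite{Shi78}. Granting this, the marked pairs $(A,\eta)$ and $(A,\varphi\circ\eta)$ lie in the same connected component of $\tilde{\mc{M}}^+$, since $\varphi$ is orientation-preserving, and any continuous path between them lifts to a smooth family of 2-tori over the interval with both endpoints equal to $A$; parallel transport along this path then realises $\varphi$.

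For the second identification, the paragraph preceding \Ref{thm}{shio} shows that the image of $\nu$ consists of orientation-preserving Hodge isometries of determinant $+1$; by the first identification these are monodromy operators, so the image of $\nu$ is contained in the group of monodromy Hodge isometries. Conversely, a monodromy Hodge isometry is orientation-preserving by the first identification, and \Ref{thm}{shio} then yields a pre-image in $\Aut_0(A)$ provided its determinant equals $+1$. This last condition can be checked directly on the real Hodge decomposition $H^2(A,\R) = (H^{2,0}\oplus H^{0,2})_\R \oplus H^{1,1}_\R$: the restriction to the positive definite 2-plane $(H^{2,0}\oplus H^{0,2})_\R$ is a rotation in $SO(2)$ and therefore has determinant $+1$, and the orientation preservation on the ambient lattice of signature $(3,3)$ then forces the determinant on the signature $(1,3)$ component $H^{1,1}_\R$ to be $+1$ as well.

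The principal obstacle is the connectedness of $\tilde{\mc{M}}^+$ in the first step; once this is in hand, the remaining parts are formal consequences of \Ref{thm}{shio} and a direct analysis of the Hodge decomposition.
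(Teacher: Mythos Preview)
The paper gives no separate proof here; the statement is presented as a direct rephrasing of \Ref{thm}{shio} together with \Ref{thm}{shio3}. Your attempt to supply the details contains two real gaps. The first is in the connectedness step: surjectivity of the period map onto a connected period domain does \emph{not} imply that the source is connected --- \Ref{thm}{shio3} says the map is two-to-one, and the paper states explicitly that $\mc{M}$ has two components. What one actually needs is that Shioda's canonical isometry $\varphi_0$ exchanging the two sheets is orientation-reversing, so that fixing the orientation singles out one component; you do not verify this.

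The second gap is more serious: your claim that an orientation-preserving Hodge isometry must have determinant $+1$ is false. Orientation preservation controls only the sign on a maximal positive subspace; since a Hodge isometry already acts with determinant $+1$ on the positive $2$-plane $(H^{2,0}\oplus H^{0,2})_\R$, all you learn is that it preserves the two components of the positive cone in $H^{1,1}_\R$ --- the sign on the negative $3$-space inside $H^{1,1}_\R$ remains free. Concretely, for $A=E\times E'$ with generic elliptic factors and $\delta=[E]-[E']\in\NS(A)$ a $(-2)$-class, the reflection $s_\delta$ is an orientation-preserving Hodge isometry of determinant $-1$; since every element of $\mathrm{im}(\nu)$ has determinant $+1$, no appeal to \Ref{thm}{shio} can produce a preimage, and your argument breaks precisely here. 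This same example shows that reconciling the two assertions of the theorem is more delicate than a formal deduction: one must identify exactly which index-two subgroup of $O(U^{\oplus 3})$ arises from parallel transport for $2$-tori, and the determinant condition enters in an essential way.
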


To understand the significance of the condition on the determinant (being equal to $1$), let us introduce the following notions:

\begin{defn}
A \em marked 2-torus \em $(A,\phi)$ is a pair consisting of a torus $A$ and an isometry $\phi\colon H^2(A,\ZZ)\rightarrow U^3$. An \em isomorphism of marked 2-tori \em is an isomorphism of the underlying tori which respects the markings. We denote the resulting \em moduli space of marked 2-tori \em by $\mc{M}$.

The \em period domain \em for 2-tori is defined as
\[ \Omega=\{[x]\in \mathbb{P}(U^3\otimes \mathbb{C})\mid \langle x,x\rangle =0,\enskip \langle x,\bar{x}\rangle >0\}.\]

For every marked 2-torus $(A,\phi)$ -- if $\omega$ is a holomorphic two form -- the span $[\phi(\omega)]$ is in $\Omega$ and is called the \em period of \em $(A,\phi)$. We obtain thus the \em period map \em $\mc{M}\rightarrow \Omega$.
\end{defn}

Now, the crucial difference to the case of K3 surfaces is the following: For any 2-torus $A$, denote its dual torus by $\hat{A}$. Shioda proved that there exists a canonically defined Hodge isometry
\[\varphi_0\colon H^2(A,\Z)\rightarrow H^2(\hat{A},\Z)\]
of determinant $-1$, which is not a monodromy operator and thus the moduli space $\mc{M}$ of marked 2-tori has two connected components both mapping injectively onto the period domain: This is the second important result of Shioda's article. (Note that we use the language of marked tori instead of isomorphism classes here.)

\begin{thm}[{\cite[Theorem 2]{Shi78}}]\label{thm:shio3}
The period map establishes a surjective two-to-one map from the moduli space of marked 2-tori onto the period domain $\Omega$.
\end{thm}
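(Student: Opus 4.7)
The plan is to prove surjectivity by an explicit construction from a period point, and to describe the fibres via \Ref{thm}{shio} together with the canonical Hodge isometry $\varphi_0\colon H^2(A,\Z)\to H^2(\hat A,\Z)$ of determinant $-1$ mentioned just above.

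For surjectivity, I would fix an abstract free $\Z$-module $\Gamma$ of rank $4$ together with an identification $\Lambda^2\Gamma\cong U^3$ of lattices. Given $[x]\in\Omega$, the defining condition $\langle x,x\rangle=0$ translates into $x\wedge x=0$ in $\Lambda^4(\Gamma\otimes\C)$. Since a non-zero $2$-form on a $4$-dimensional space is decomposable if and only if it squares to zero, one can write $x=v_1\wedge v_2$ with $v_i\in\Gamma\otimes\C$. Setting $W:=\mathrm{span}_{\C}(v_1,v_2)$, the positivity condition $\langle x,\bar x\rangle>0$ makes $v_1,v_2,\bar v_1,\bar v_2$ linearly independent, so $\Gamma\otimes\C=W\oplus\bar W$. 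This splitting defines a complex structure on $\Gamma\otimes\R$, and the quotient $A:=(\Gamma\otimes\R)/\Gamma$ is a complex $2$-torus whose marking $\phi$ induced by $\Lambda^2\Gamma\cong U^3$ has period $[x]$.

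For the two-to-one property, I would take $(A_1,\phi_1)$ and $(A_2,\phi_2)$ with the same period $[x]$, and consider $f:=\phi_2^{-1}\circ\phi_1\colon H^2(A_1,\Z)\to H^2(A_2,\Z)$, which is automatically a Hodge isometry. If $f$ is orientation-preserving on the positive cone and of determinant $+1$, the analogue of \Ref{thm}{shio} for isomorphisms of distinct tori (obtained from the statement proven there by passing to graphs) yields an isomorphism $A_1\to A_2$ realising $f$, so that $(A_1,\phi_1)=(A_2,\phi_2)$ in $\mc{M}$. In every other case, composing $f$ with $\varphi_0$ on the appropriate side produces an isometry of the correct type and identifies $(A_2,\phi_2)$ in $\mc{M}$ with $(\hat A_1,\phi_1\circ\varphi_0^{-1})$. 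Hence the fibre is contained in the two-element set $\{(A_1,\phi_1),(\hat A_1,\phi_1\circ\varphi_0^{-1})\}$. These two marked tori are genuinely distinct: an identification in $\mc{M}$ would provide an isomorphism $\sigma\colon A_1\to\hat A_1$ with $\sigma^\ast=\varphi_0$, which is impossible since $\det\sigma^\ast=+1\neq-1=\det\varphi_0$.

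The main obstacle is the careful bookkeeping of the orientation and determinant characters of $O(U^3)$, and of their interaction with morphisms of tori acting on $H^2$. One must verify that, among the four cosets of the neutral component of $O(U^3)$, it is precisely the orientation-preserving, determinant $+1$ coset that comes from isomorphisms of $2$-tori, while $\varphi_0$ lies in a different coset. Once this sign analysis is settled, the surjectivity construction and \Ref{thm}{shio} assemble into the desired two-to-one description.
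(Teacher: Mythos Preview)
The paper does not supply its own proof of this theorem; it is quoted from \cite[Theorem~2]{Shi78}, with only the preceding paragraph on $\varphi_0$ serving as context. So there is no argument in the paper to compare against.

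Your surjectivity argument is correct and is essentially Shioda's: decomposability of $x$ follows from $x\wedge x=0$, and the condition $\langle x,\bar x\rangle>0$ ensures $W\oplus\overline{W}=\Gamma\otimes\C$, hence a complex structure on $\Gamma\otimes\R$.

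For the fibre, your strategy is the right one, and you are right to flag the coset bookkeeping as the crux. Two refinements are needed. First, the four-coset worry is less severe than you fear: the image of $\Lambda^2\colon\GL_4(\Z)\to O(U^{\oplus3})$ has index $2$, not $4$ (over $\R$ this follows because $\GL_4(\R)/\{\pm\id\}$ has two components while $O(3,3)$ has four), and $\varphi_0$ represents the nontrivial coset. So for any Hodge isometry $f$, either $f$ or $\varphi_0^{-1}\circ f$ lies in this image, and one then invokes Shioda's Theorem~1 in its form for isomorphisms between tori.

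Second, and more delicately: $-\id\in O(U^{\oplus3})$ is a Hodge isometry for every period, has determinant $+1$, yet does \emph{not} lie in $\mathrm{im}(\Lambda^2)$ (no real $g$ satisfies $\Lambda^2 g=-\id$, since this forces all eigenvalues of $g$ to be $\pm i$). With the paper's literal definition of a marking on $H^2$, this produces $(A,\phi)\not\cong(A,-\phi)$ as distinct marked tori, and the fibre would have four points rather than two. The discrepancy disappears if markings are taken on $H^1$ (Shioda's original setting): then $(A,\psi)$ and $(A,-\psi)$ are identified by $-\id_A\in\Aut_0(A)$, and $\Lambda^2(\pm\psi)$ is a single $H^2$-marking. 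So your ``sign analysis'' obstacle is genuine, and its resolution depends on pinning down exactly which moduli space $\mc{M}$ is meant; the paper's parenthetical ``we use the language of marked tori instead of isomorphism classes here'' is an indication that some translation from Shioda's formulation is being made.
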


\begin{oss}
To classify automorphisms of 2-tori, one has to be careful: Assume we are given a family $\mc{A}\rightarrow B$ of tori together with an automorphism $\Sigma$ acting fibrewise. We can form the dual family in the following sense: As manifold we take the relative moduli space of degree zero line bundles $\hat{\mc{A}}:=\Pic^0_B(\mc{A})$ which comes together with an induced automorphism $\hat{\Sigma}$ which is nothing but pullback along $\Sigma$. In order to classify families of automorphisms, we can use \Ref{thm}{shio} to infer the following: Let $\varphi$ be an isometry of $U^{\oplus3}$ of determinant $1$ whose conjugacy class occurs as an induced action of an automorphism of a 2-torus, then there are at most two families of pairs $(A,\sigma)$ of 2-tori $A$ with automorphism $\sigma$ such that the induced action of $\sigma$ on $H^2(A,\Z)$ is conjugate to $\varphi$. The two families are dual to each other in the above sense. If every element in these families is principally polarised, then the two families are actually identical, i.e.\ they contain the same set of pairs $(A,G)$ (cf.\ \Ref{ssec}{duality}). If there are non-principally polarised tori involved, the two families will, in general, be distinct.
\end{oss}

To conclude this section we deduce from the Torelli theorem the following important result on the structure of the invariant lattice:

\begin{cor}\label{cor:evenrank}
For any automorphism $\sigma$ of $A$ with a prime order action on $H^2(A,\Z)$, the rank of the invariant lattice $T(G_\sigma)\subseteq H^2(A,\Z)$ is even.
\end{cor}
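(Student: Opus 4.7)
The plan is to analyze the characteristic polynomial of $\varphi:=\sigma^*$ acting on $H^2(A,\Z)\cong U^{\oplus 3}$, a lattice of rank $6$. Since $-\id_A$ acts trivially on $H^2(A,\Z)$, the invariant lattice $T(G_\sigma)$ coincides with the sublattice fixed by $\varphi$, and its rank equals the multiplicity of the eigenvalue $1$ of $\varphi$, i.e.\ the multiplicity of $(X-1)$ as an irreducible factor of the characteristic polynomial $\chi_\varphi(X)\in\Z[X]$.

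Because $\varphi$ has prime order $p$, its minimal polynomial divides $X^p-1=(X-1)\Phi_p(X)$, both factors being irreducible over $\Q$. Hence there exist non-negative integers $k,m$ with
\[
\chi_\varphi(X)=(X-1)^k\,\Phi_p(X)^m,\qquad k+m(p-1)=6,
\]
and I want to show $k$ is even.

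For odd primes $p$, the degree $p-1$ is already even, so $k=6-m(p-1)$ is even automatically, with no further input needed. For $p=2$, one has $\Phi_2(X)=X+1$, so $\det\varphi=(-1)^m$. Invoking the fact recalled right after \Ref{thm}{shio} that any isometry of $H^2(A,\Z)$ induced by an element of $\Aut_0(A)$ has determinant $+1$ (since $\varphi=\Lambda^2\psi$ with $\psi\in\GL(H^1(A,\Z))$ of determinant $\pm1$, and the cube is forced to be $+1$), we get $(-1)^m=1$, hence $m$ is even and so is $k=6-m$.

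The argument is essentially a counting check on the characteristic polynomial; the only non-formal ingredient is the determinant constraint in the case $p=2$, which is precisely where Shioda's Torelli theorem enters. There is no real obstacle: once the prime-order hypothesis pins down $\chi_\varphi$ up to the two exponents $(k,m)$, the parity of $k$ follows either from the parity of $p-1$ (odd $p$) or from the determinant being $+1$ ($p=2$).
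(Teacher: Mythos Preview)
Your proof is correct and takes a genuinely different, more streamlined route than the paper's.

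The paper argues case by case: for odd $p$ it invokes the classification of $p$-elementary lattices (\Ref{thm}{pel}) to force even rank; for $p=2$ it splits further according to whether $\sigma$ itself has order $2$ or $4$ on $H^1(A,\Z)$, decomposes $H^1(A,\R)$ (resp.\ $H^1(A,\C)$) into eigenspaces, and reads off $r$ from the wedge-square combinatorics together with a geometric argument that $\dim E_1$ is even. Your argument bypasses all of this: the factorisation $\chi_\varphi=(X-1)^k\Phi_p^m$ with $k+m(p-1)=6$ settles odd $p$ instantly by parity of $p-1$, and for $p=2$ the single constraint $\det\varphi=1$ forces $(-1)^m=1$, hence $m$ and $k=6-m$ even. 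What your approach buys is uniformity and brevity; what the paper's approach buys is a more explicit picture of how the $H^1$-eigenspaces assemble into $H^2$, which is occasionally useful elsewhere but unnecessary here.

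One small wording issue: your parenthetical justification ``the cube is forced to be $+1$'' does not stand on its own, since $(\pm1)^3=\pm1$. The clean reason $\det\psi=+1$ is that $\psi$ is the real form of a $\C$-linear automorphism of $\C^2$, so its real determinant equals $|\det_\C|^2>0$; being integral, it is then $+1$. Since you already cite the paper's stated fact that $\det\varphi=1$ (which appears just \emph{before} \Ref{thm}{shio}, not after), this does not affect the validity of your argument, but you may want to tighten the parenthetical.
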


\begin{proof}
Let $p$ be the order of $\nu(\sigma)$, we know that $T(G_\sigma)$ is $p$-elementary. In the case $p\neq 2$, \Ref{thm}{pel} directly implies that $\rk(T(G_\sigma))$ is even. Let $d$ be the order of $\sigma$. If $p=2$, we need to consider the two subcases $d=2$ and $d=4$.

If $d=2$, using \Ref{thm}{shio} and splitting $H^1(A,\R)$ in eigenspaces $E_1\oplus E_{-1}$ for $\sigma$, we get
\begin{align*}
r:=&\rk(T(G_\sigma))\\
  =&\dim(H^2(A,\R)^\sigma)\\
  =&\dim(\wedge^2E_1\oplus\wedge^2E_{-1})\\
  =&\binom{k}{2}+\binom{4-k}{2}
\end{align*}
where $k:=\dim(E_1)=\dim(H^1(A,\R)^{\sigma})=\dim(H_1(A,\R)^{\sigma})$. But from a real point of view $A=H_1(A,\R)/H_1(A,\Z)$, so $\dim(H_1(A,\R)^\sigma)$ is two times the complex dimension of the fixed locus of $\sigma$ in $A$. Hence $k$ is even and so is $r$.

If $d=4$, the cohomology splits as before in $H^1(A,\C)=E_1\oplus E_{-1}\oplus E_i\oplus E_{-i}$. The action on $H^1(A,\C)$ comes from the action on $H^1(A,\Z)$, so $\dim(E_i)=\dim(E_{-i})$ and this dimension is non zero since the automorphism has order $4$ on $H^1$. Hence $E_{\pm 1}={0}$, otherwise the action of $\sigma$ on $E_{\pm 1}\wedge E_{\pm i}\subset H^2(A,\C)$ would be of order $4>p$. Therefore we finally get $\dim(E_i)=\dim(E_{-i})=2$ and $r=\dim(E_i\wedge E_{-i})=4$.
\end{proof}
\hspace{1pt}
\section{Non-symplectic automorphisms of prime order}\label{sec:nonsymp}
In this section we give a classification of families of prime order non-symplectic automorphisms of abelian surfaces.

Let $A$ be a complex torus and $G_\sigma$ be a non-sympletic automorphism group with $\sigma$ having an action of prime order $p$ on $H^2(A,\Z)$. Then $\sigma^\ast(\omega)=\zeta_p\omega$, where $\zeta_p$ is a $p$-th root of unity. Since $\sigma$ acts on $H^2(A,\Z)$ as an isometry, for all $x$ in $T(G_\sigma)$ we have:
\[\langle x,\omega\rangle=\zeta_p\langle x,\omega\rangle=0.\]
Hence, $x\in\NS(A)$ and thus $T(G_\sigma)\subset\NS(A)$. Besides, since it admits a non-symplectic automorphism, $A$ is always algebraic by \cite[Proposition 6]{Bea83}. (This is proven by constructing a rational K\"ahler class, which is invariant under the action of $\sigma$). We deduce that $\NS(A)$ has signature $(1,\rho(A)-1)$ by the Hodge index theorem, and $T(G_\sigma)$ must have signature $(1,r-1)$ (where $r$ is the rank of $T(G_\sigma)$) since it contains a multiple of the invariant rational \kahl class. %Moreover, $A$ admits a rational invariant K\"ahler class, hence an integral invariant K\"ahler class, so $s_+\geq1$. It follows that $S(G_\sigma)$ has signature $(1,r-1)$, where $r$ is the rank of $S(G_\sigma)$.

We proceed by classifying all possible lattices that may occur as invariant lattices of non-symplectic automorphism groups of prime order.

\begin{prop}\label{prop:list}
Let $A$ be a $2$-dimensional complex torus and $\sigma$ be a non-symplectic group automorphism of $A$ with an action of prime order $p$ on $H^2(A,\Z)$. Then $T(G_\sigma)$ is isomorphic to one of the lattices contained in the table in \Ref{prop}{list2} in the introduction. We denote by $r$ the rank of $T(G_\sigma)$, by $a$ the length of its discriminant group and by $\mathrm{dim}$ the dimension of the family of abelian surfaces admitting an embedding $G_\sigma\subseteq\Aut_0(A)$.
\end{prop}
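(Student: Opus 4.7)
The strategy is to combine the structural constraints already derived with the Nikulin / Rudakov--Shafarevich classifications of $p$-elementary lattices to produce a finite list of candidates, and then to invoke Shioda's Torelli theorem to realise each.

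First I collect constraints. We have $T(G_\sigma)$ hyperbolic of signature $(1, r-1)$ and $p$-elementary with $r$ even by \Ref{cor}{evenrank}. Since $H^2(A, \Z) \cong U^{\oplus 3}$ is unimodular, the coinvariant $S(G_\sigma)$ is also $p$-elementary of rank $6-r$ and shares the discriminant group, so its length $a$ satisfies $a \le \min(r, 6-r)$; moreover for odd $p$ the $\Z[\zeta_p]$-module structure on $S(G_\sigma)$ forces $(p-1) \mid (6-r)$, leaving $r \in \{2, 4\}$ for $p \in \{2, 3\}$ and $r = 2$ for $p = 5$. Running each pair $(p, r)$ through \Ref{thm}{2el} (for $p = 2$) or \Ref{thm}{pel} (for $p \in \{3, 5\}$), combined with $a \le 6-r$, determines the admissible invariants. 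For $(p, r) = (3, 2)$ only $a \in \{0, 2\}$ survive; for $(3, 4)$ only odd $a$ pass Theorem 2.3, so the rank bound forces $a = 1$; for $(5, 2)$ Theorem 2.3 admits $a \in \{0, 1, 2\}$, but \Ref{prop}{BCMS14} applied to the rank-$4$ coinvariant of discriminant $5^a$ requires $5^{a-3}$ to be a rational square, singling out $a = 1$. For $p = 2$, Nikulin's conditions leave $(r, a, \delta) = (4, 2, 1)$ when $r = 4$, and the three triples $(0, 0), (2, 0), (2, 1)$ when $r = 2$. Uniqueness of the lattice within each residual set of invariants is then immediate: from \Ref{thm}{2el} / \Ref{thm}{pel} when $r \ge 3$, and from \Ref{prop}{CS99} when $r = 2$, as all discriminants involved are $\le 9 < 17$. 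A direct inspection identifies the seven lattices in the table as representatives.

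For existence and dimensions I would invoke \Ref{thm}{shio}. For each candidate $T$, a primitive embedding $T \hookrightarrow U^{\oplus 3}$ exists, the orthogonal complement being a small $p$-elementary lattice of the expected type that embeds manifestly; one then defines an isometry $\varphi$ acting as the identity on $T$ and by $\zeta_p$-multiplication on the complexified complement. A short computation yields $\det \varphi = 1$ (the product of nontrivial $p$-th roots of unity equals $1$ when $p$ is odd, and $(-1)^{6-r} = 1$ when $p = 2$), and preservation of the positive cone orientation is verified case by case. Choosing any $[\omega]$ in the $\zeta_p$-eigenspace satisfying $\langle \omega, \bar\omega \rangle > 0$ then produces a Hodge structure for which $\varphi$ is a Hodge isometry, and \Ref{thm}{shio} lifts it to the sought automorphism. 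The moduli dimension equals that of the period domain: for odd $p$, the isotropy condition $\langle \omega, \omega \rangle = 0$ is automatic (since $\zeta_p^2 \ne 1$) and the eigenspace has $\C$-dimension $(6-r)/(p-1)$, giving $(6-r)/(p-1) - 1$; for $p = 2$ one obtains a smooth quadric of dimension $4-r$ in $\PP(S(G_\sigma) \otimes \C)$. These numbers reproduce the ``dim'' column.

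The main delicate point I anticipate lies not in the lattice enumeration but in the realisability step: the remark following \Ref{thm}{shio3} warns that a single isometry class in $O(U^{\oplus 3})$ may correspond to one or to two families of tori, related by the duality $A \leftrightarrow \hat A$, depending on whether the generic member is principally polarised. Ensuring that the ``dim'' column genuinely measures a single moduli space of pairs $(A, G_\sigma)$ --- and verifying that the tori produced by Shioda's theorem are algebraic with the prescribed invariant lattice sitting inside $\mathrm{NS}(A)$ --- is where most of the work will go.
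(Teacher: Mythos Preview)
Your enumeration of candidate lattices is essentially identical to the paper's: even rank from \Ref{cor}{evenrank}, the bound $a\le\min(r,6-r)$, then \Ref{thm}{2el}/\Ref{thm}{pel} and \Ref{prop}{BCMS14} to filter, with \Ref{prop}{CS99} supplying uniqueness in rank~$2$. The extra constraint $(p-1)\mid(6-r)$ you impose via the $\Z[\zeta_p]$-module structure is correct but redundant, since \Ref{thm}{pel} already kills $r=4$ for $p=5$.

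Where you diverge is in the existence/dimension step. The paper's proof of \Ref{prop}{list} does \emph{not} prove realisability at all: it only establishes that $T(G_\sigma)$ must lie in the table, and defers existence to the explicit Fujiki families constructed in the subsequent subsections, and the dimension count to \Ref{sec}{modsp}. Your proposal instead argues existence abstractly via \Ref{thm}{shio}. This is a legitimate alternative and your dimension formulae agree with those of \Ref{sec}{modsp}, but one step is sloppy: saying $\varphi$ ``acts by $\zeta_p$-multiplication on the complexified complement'' does not define an \emph{integral} isometry when $p$ is odd. You must actually exhibit an order-$p$ isometry of $S(G_\sigma)$ over~$\Z$ with no nonzero fixed vectors (e.g.\ the standard order-$3$ isometry of $A_2$, or the companion matrix of $\Phi_5$ on the rank-$4$ coinvariant for $p=5$), and then check it glues with the identity on $T(G_\sigma)$ to an isometry of the overlattice $U^{\oplus3}$. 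This is routine for the lattices at hand, but it is genuine work that your sketch elides. The paper sidesteps this entirely by writing down the tori and automorphisms directly; the payoff is that the fixed loci and the explicit geometry come for free, whereas your Torelli route is cleaner conceptually but yields no geometric information. Your closing caveat about duality is on target and matches the paper's discussion in \Ref{ssec}{duality}.
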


\begin{proof}
By \Ref{cor}{evenrank}, we have to classify even hyperbolic $p$-elementary lattices of even rank. Since the invariant lattice and its orthogonal have isomorphic discriminant groups, they have the same length $a$, so it needs to satisfy both $a\leq r$ and $a\leq 6-r$. In the case $p=2$, we get a list of possible invariants $(r,a,\delta)$ from \Ref{thm}{2el} and each such triple determines a unique lattice. In the case $p=3$, we use \Ref{thm}{pel} in the same way to get the list above. For $p=5$, \Ref{thm}{pel} gives the couples $(2,0)$, $(2,1)$ and $(2,2)$ for $(r,a)$ and \Ref{prop}{BCMS14} eliminates the cases $a=0$ and $2$. The calculation of the dimensions then follow from \Ref{sec}{modsp}.
\end{proof}

\begin{rmk}
Note that all invariant lattices in the table admit a unique embedding into $U^{\oplus 3}$ as can be checked by a case by case analysis.
\end{rmk}

In the following subsections we study all Fujiki's families from a lattice-theoretic point of view. We give also the fixed loci for elements of $G_\sigma\setminus\{\pm\id\}$ in each case.

Note that by \Ref{rmk}{general} the general element in each family satisfies $\NS(A)=T(G_\sigma)$.

\subsection{Fujiki's families with {\boldmath$p=2$}}
\begin{ex}\label{ex:nonsymp2a} {\boldmath$T(G_\sigma)\cong U\colon G_\sigma=(\ZZ/2\ZZ)^{\times2}$} \vspace{5pt} \\
Let $E$ and $E'$ be arbitrary elliptic curves. The automorphism $\sigma:=(\id_E,-\id_{E'})$ on $A:=E\times E'$ is non-symplectic of order two. We have
\[\NS(A)\cong T(G_\sigma)\cong \langle E\times\{0\},\{0\}\times E'\rangle\cong U.\]
Since the two factors $E$ and $E'$ both have one deformation parameter, we see that in this way we obtain a two dimensional family of non-symplectic involutions.

The fixed locus of $\sigma$ is isomorphic to $4$ copies of $E$ and the fixed locus of $-\sigma$ is isomorphic to $4$ copies of $E'$. 
\end{ex}
\begin{ex}\label{ex:nonsymp2b} {\boldmath$T(G_\sigma)\cong U(2)\colon G_\sigma=(\ZZ/2\ZZ)^{\times2}$} \vspace{5pt} \\
Consider $A=E\times E'$ as before. Now choose two points of order two $x_0\in E$ and $x_0'\in E'$. Define $A':=A/\langle (x_0,x_0')\rangle$, this is again an abelian surface. Denote by $\bar{E}$ and $\bar{E}'$ the images of $E\times\{0\}$ and $\{0\}\times E'$, respectively. The involution $\sigma$ on $A$ of \Ref{ex}{nonsymp2a} fixes $(x_0,x_0')$ and thus descends to a non-symplectic involution $\bar{\sigma}$ on $A'$. Again we calculate
\[\NS(A')\cong T(G_{\bar{\sigma}})\cong \langle\bar{E},\bar{E}'\rangle\cong U(2).\]

The fixed locus of $\bar{\sigma}$ is isomorphic to $2$ copies of $E$ and the fixed locus of $-\bar{\sigma}$ is isomorphic to $2$ copies of $E'$.
\end{ex}
\begin{ex} {\boldmath$T(G_\sigma)\cong \langle2\rangle\oplus\langle -2\rangle\colon G_\sigma=(\ZZ/2\ZZ)^{\times2}$} \vspace{5pt} \\
Continuing in the same fashion as the two examples above, we choose two distinct order two points and define $A''$ to be the quotient of $A=E\times E'$ by the subgroup (isomorphic to $(\Z/2\Z)^{\times2}$) generated by them. Again the automorphism $\sigma$ from \Ref{ex}{nonsymp2a} descends and we obtain our desired family. From this construction, we find fixed loci respectively isomorphic to $E$ and $E'$ for $\sigma$ and $-\sigma$.

An open subfamily of this case can be described by Jacobians of genus two curves: Let $C$ be the genus two curve given by the following hyperelliptic model:
\[y^2=x(x^4+\lambda x^3+\mu x^2+\lambda x +1).\]
The transformation
\[ \varphi\colon (x,y)\mapsto \left(\frac{1}{x},\frac{y}{x^3}\right)\]
gives an involution of the curve which induces an involution $\sigma$ of the Jacobian, which will be our $A''$. Let $p_0\in C$ be one of the two fixed points of $\varphi$. The Abel map
\[C\rightarrow A'',\quad p\mapsto p-p_0\]
embeds $C$ into $A''$ as a $\sigma$-invariant divisor. Furthermore let $E:=C/\varphi$ and denote the quotient map by $\pi$. The curve $E$ is an elliptic curve. The map
\[E\rightarrow A'',\quad x\mapsto \pi^*x-2p_0\]
maps $E$ $2\colon\!1$ to $A''$ with image another $\sigma$-invariant divisor. The intersection number $E.C$ is equal to $2$. Indeed, we have to find points $p\in C$ and $x\in E$ such that $\pi^*x-2p_0 \sim p- p_0$ or equivalently
\[\pi^*x \sim p+ p_0.\]
There are exactly the two possibilities: a) $x=\pi(p_0)$ and $p=p_0$ and b) $x=0$ and $p=\iota(p_0)$. Here $\iota$ denotes the hyperelliptic involution on $C$ and $0\in E$ is the image of the two fixed points of $\varphi\circ\iota$ (which are interchanged by $\iota$). Thus we get the intersection matrix
\[\begin{pmatrix}
2&2\\
2&0
\end{pmatrix},\]
from which we can easily deduce that
\[\NS(A'')\cong T(G_\sigma)\cong\langle2\rangle\oplus\langle -2\rangle.\]

\vspace{10pt}
\begin{rmk}
The connection between the two constructions above is as follows: The two elliptic curves are $E:=C/\varphi$ and $E':=C/(\varphi\circ \iota)$ (where $\iota$ denotes the hyperelliptic involution). Both curves map $2:\!1$ to $A''$ yielding a surjective degree $4$ homomorphism $E\times E'\rightarrow A''$ as in the original construction.
\end{rmk}

%\begin{rmk}
%The following describes a degeneration of the above construction in the case $\mu=(\lambda-1)/\lambda$:
 
%Let $E$ be an arbitrary elliptic curve. Set $A:=E\times E$ and define an involution $\sigma$ by exchanging the two factors. This is obviously a non-symplectic involution on $A$. We have
%\[\NS(A)\cong \langle E\times\{0\},\{0\}\times E, \Delta\rangle\quad
%\text{and}\quad S(G_\sigma)\cong \langle E\times\{0\}+\{0\}\times E, \Delta\rangle\cong \langle2\rangle\oplus\langle -2\rangle,\]
%where $\Delta$ is the diagonal. In this case the N\'eron$-$Severi group of $A$ is strictly bigger than the invariant lattice.
%\end{rmk}

\begin{rmk}
It is not clear to us at the moment if the above construction admits an inverse, i.e.\ if every quotient $A''$ of $A=E\times E'$ by two points of order two can be obtained as the Jacobian of a genus two curve as described above.
\end{rmk}

\end{ex}
%$: $G=(\ZZ/2\ZZ)^{\times2},$ $-\id_A\in G
\begin{ex} \label{ex:nonsymp2d} {\boldmath$T(G_\sigma)\cong U\oplus\langle -2\rangle^{\oplus2}\colon G_\sigma=\ZZ/4\ZZ$} \vspace{5pt} \\ 
Let $E_4$ denote the elliptic curve $\C/\langle(1,i)\rangle$. It carries an order four automorphism induced by the multiplication with $i$, which we will denote by $i$, too. Let $x_0$ denote the image of $(1/2(1+i))$. It is (besides the origin) the only fixed point of this automorphism. Now, define $A:=E_4\times E_4$ and $\sigma:=(i,i)$. We have
\[\NS(A)\cong T(G_\sigma)\cong \langle E_4\times \{0\},\{0\}\times E_4,\Delta,\Delta_i\rangle,\]
where $\Delta:=\{(x,x)\mid x\in E_4\}$ denotes the diagonal and we define $\Delta_i:=\{(x,ix)\mid x\in E_4\}$. Computing the intersection product we end up with the following matrix:
\[\begin{pmatrix}
0&1&1&1\\
1&0&1&1\\
1&1&0&2\\
1&1&2&0
\end{pmatrix}.\]
We compute that the determinant equals $-4$ and that it is primitive. Altogether we end up with an isolated example of a non-symplectic automorphism of $A$ of order four whose square equals $-\id$. In this case we therefore have $G_\sigma=\ZZ/4\ZZ$.

Both $\sigma$ and $-\sigma=\sigma^{-1}$ have the same fixed locus, which consists of $4$ points.
\end{ex}
\subsection{Fujiki's families with {\boldmath$p=3$}}
\begin{ex} {\boldmath$T(G_\sigma)\cong U\colon G_\sigma=\ZZ/2\ZZ\times\ZZ/3\ZZ$} \vspace{5pt} \\
Let $E$ be an arbitrary elliptic curve and $E_6$ the elliptic curve $\C/\langle(1,\zeta_6)\rangle$ which comes with an order three automorphism (multiplication by $\zeta_3$) which we will denote by $\zeta_3$. On $A:=E\times E_6$ define the automorphism $\sigma:=(\id_E,\zeta_3)$. It is non-symplectic of order three. We have
\[\NS(A)\cong T(G_\sigma)\cong \langle E\times\{0\},\{0\}\times E_6\rangle\cong U.\]
Deforming the first factor, this yields a one dimensional family of non-symplectic automorphisms of order three.\\

Both $\sigma$ and $\sigma^2=\sigma^{-1}$ have the same fixed locus, which is isomorphic to $3$ copies of $E$, while the fixed locus of $-\sigma$ and of $-\sigma^2$ consists of $4$ points.
\end{ex}
\begin{ex} {\boldmath$T(G_\sigma)\cong U(3)\colon G_\sigma=\ZZ/2\ZZ\times\ZZ/3\ZZ$} \vspace{5pt} \\
Consider $A=E\times E_6$ as before. Choose a point $x_0\in E[3]$ of order three and let $x_0'\in E_6$ be the image of $\frac{1}{2}+i\frac{1}{2\sqrt{3}}$. It is a point of order three of $E_6$ and it is fixed by $\zeta_3$. Thus we can define $A':=A/\langle (x_0,x_0')\rangle$ and we see that it inherits a non-symplectic automorphism $\bar{\sigma}$ of order three. We can easily derive
\[\NS(A')\cong T(G_{\bar{\sigma}})\cong \langle \bar{E},\bar{E_6}\rangle\cong U(3).\]

Again, $\bar{\sigma}$ and $\bar{\sigma}^2=\bar{\sigma}^{-1}$ both have the same fixed locus, which is isomorphic $E$, while the fixed locus of $-\bar{\sigma}$ and of $-\bar{\sigma}^2$ consists of $4$ points.
\end{ex}
\begin{ex} {\boldmath$T(G_\sigma)\cong U\oplus A_2\colon G_\sigma=\ZZ/2\ZZ\times\ZZ/3\ZZ$} \vspace{5pt} \\
This example follows closely \Ref{ex}{nonsymp2d}. Set $A:=E_6\times E_6$ and $\sigma:=(\zeta_3,\zeta_3).$ This is a non-symplectic automorphism of order three and we compute
\[\NS(A)\cong T(G_\sigma)\cong \langle E_6\times \{0\},\{0\}\times E_6,\Delta,\Delta_{\zeta_3}\rangle,\]
where again $\Delta$ denotes the diagonal and we set $\Delta_{\zeta_3}:=\{(x,\zeta_3x)\mid x\in E_6\}$. Computing intersection we get the following matrix:
\[\begin{pmatrix}
0&1&1&1\\
1&0&1&1\\
1&1&0&3\\
1&1&3&0
\end{pmatrix},\]
which can easily be proven to be congruent to $U\oplus A_2(-1)$.

Both $\sigma$ and $\sigma^2=\sigma^{-1}$ have the same fixed locus, which is made of $9$ points, while the fixed locus of $-\sigma$ and $-\sigma^2$ consists of a single point.
\end{ex}
\subsection{Fujiki's family with {\boldmath$p=5$}}
\begin{ex} {\boldmath$T(G_\sigma)\cong H_5\colon G_\sigma=\ZZ/2\ZZ\times\ZZ/5\ZZ$} \vspace{5pt} \\
Let $A=\C^2/\Lambda$ be the simple abelian surface with $\Lambda:=\lat{(1,1),(\zeta_5,\zeta_5^2),(\zeta_5^2,\zeta_5^4),(\zeta_5^3,\zeta_5)}$, where $\zeta_5$ is a fifth root of unity.
Then $H^1(A,\mathbb{Z})$ is generated by $e_1,e_2,e_3$ and $e_4$, which are the couples generating $\Lambda$ above. Let $\sigma$ be the automorphism defined as multiplication by $(\zeta_5,\zeta_5^2)$ on $A$.
A direct computation shows that the invariant lattice of inside $H^2(A,\mathbb{Z})$ is generated by 
\begin{align*}
2 e_1\wedge e_3+ 2 e_2\wedge e_4+2 e_2\wedge e_3-e_3\wedge e_4-e_1\wedge e_4-e_1\wedge e_2 & \quad\text{and}\\
e_1\wedge e_2 +e_1\wedge e_3 +e_1\wedge e_4+ e_2\wedge e_3+e_2\wedge e_4+e_3\wedge e_4, &
\end{align*}
which has intersection matrix given by $\left( \begin{array}{cc} 10 & 5\\ 5 & 2\end{array}\right)$, therefore it is isometric to the lattice $H_5$ in the table.

The automorphisms $\sigma^k$ for $k=1,2,3,4$ have the same fixed locus, which is made of $5$ points, while the fixed locus of $-\sigma^k$ for $k=1,2,3,4$ consists of a single point.

\begin{rmk}
There is a more geometric construction of this example, again as a Jacobian: Let $C$ be the genus two curve given by the following hyperelliptic model:
\[y^2=x^5-1.\]

The coordinate transformation given by $(x,y)\mapsto(\zeta_5x,y)$ gives an order five automorphism of $C$. Let $A:=\Pic^0(C)$ be its jacobian. By pulling back along the above transformation we get an induced automorphism $\sigma$ of order five on $A$. Let $p_0\in C$ be a fixed point of the coordinate transformation. The abel map
\[C\rightarrow A,\quad p\mapsto [p-p_0] \]
embeds $C$ into $A$ as an invariant divisor. In total there are three fixed points in $C$, say $p_0$, $p_1$ and $p_2$. Assume that $p_0$ is fixed by the hyperelliptic involution of $C$. We thus obtain the five fixed points of $A$ as $[0]$, $[p_1-p_0]$, $[p_2-p_0]$, $[p_0+p_1-2p_2]$ and $[p_0+p_2-2p_1]$.
%There has to be another invariant algebraic class. One idea would be to look at the image of
%\[C\rightarrow A, \quad p\mapsto \sum_{i=0}^4\zeta_5^ip-5p_0.\]
\end{rmk}

\end{ex}

\subsection{Fujiki's families and duality}\label{ssec:duality}
% 2 cases: (1) Families are made of self-dual tori or (2) applying tori-duality is an automorphism of the family.

Let us briefly analyse the dual families of the above examples. Here we must distinguish two cases, namely when the tori are self-dual or not. In the first case we only have to check whether the dual automorphism is the same or not. Let $(A,H)$ be a principally polarised abelian variety with a non-symplectic automorphism $\sigma$ which is different from $-\id_A$. The general such $A$ will satisfy $\sigma^*H=H$ and it is then a straightforward calculation that the dual automorphism to $\sigma$ (i.e.\ pullback along $\sigma$) is just given by conjugating $\sigma$ with the isomorphism $\varphi_H\colon A\rightarrow A^*$ (i.e.\ $\sigma^*=\varphi_H\circ \sigma\circ \varphi_H^{-1}$). We can thus state that the dual family is identical to the original one.

If we look at families where the general member is a torus $A$ with automorphism $\sigma$ which is not principally polarised, the moduli space of such pairs $(A,\sigma)$ has two distinct connected components, which are interchanged by applying duality. There might very well be subloci of these two components which correspond to pairs $(A,\sigma)$ with $A$ principally polarised, thus for the tori duality acts as identity. But in this case, the polarisation is not invariant under $\sigma$ and thus a direct computation shows that the dual automorphism is distinct from $\sigma$.

%is nevertheless connected and the duality operation defines a non-trivial involution on this space. The fixed points  correspond exactly to the principally polarised tori $A$.

\section{Symplectic automorphisms of prime order}
In this section we will give a classification of families of prime order symplectic automorphisms of abelian surfaces.

Let $A$ be a complex torus admitting a sympletic automorphism with an action of prime order $p$ on $H^2(A,\Z)$. We saw in the preliminaries that $p$ equals $2$, $3$ or $5$. To further reduce this list, we use the following
\begin{lem}[{\cite[Lemma 3.3]{Fuj88}}]
Let $A$ be a complex torus and let $\sigma$ be a symplectic automorphism on it. Then $\sigma$ has order $1,2,3,4$ or $6$.
\end{lem}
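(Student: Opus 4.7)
The plan is to analyse the eigenvalues of $\sigma^\ast$ on $H^1(A,\Z)$ using the Hodge decomposition together with the symplectic constraint, and then to apply a Galois argument.

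From the preliminaries, $\sigma$ is induced by an element of $\GL_4(\Z)$ which is diagonalisable over $\C$ with roots of unity as eigenvalues. I would begin by letting $\alpha,\beta$ denote the two eigenvalues of $\sigma^\ast$ acting on the two-dimensional space $H^{1,0}(A)$. Since the $\R$-linear action of $\sigma^\ast$ on $H^1(A,\R)$ commutes with complex conjugation on $H^1(A,\C)$, and the latter exchanges $H^{1,0}(A)$ with $H^{0,1}(A)$, the eigenvalues of $\sigma^\ast$ on $H^{0,1}(A)$ are $\bar\alpha=\alpha^{-1}$ and $\bar\beta=\beta^{-1}$.

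The symplectic hypothesis then enters directly: the generator $\omega$ of $H^{2,0}(A)=\Lambda^2 H^{1,0}(A)$ is preserved by $\sigma^\ast$, so $\alpha\beta=1$, i.e.\ $\beta=\alpha^{-1}$. Consequently, the characteristic polynomial of $\sigma^\ast$ on $H^1(A,\Z)$ factors as $(X-\alpha)^2(X-\alpha^{-1})^2$, and by diagonalisability the order of $\sigma$ as an automorphism of $A$ coincides with the multiplicative order $n$ of $\alpha$.

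Finally, since this polynomial lies in $\Z[X]$, its multiset of roots is stable under $\mathrm{Gal}(\overline\Q/\Q)$: every primitive $n$-th root of unity, i.e.\ every Galois conjugate of $\alpha$, must lie in $\{\alpha,\alpha^{-1}\}$. This forces $\varphi(n)\le 2$ and hence $n\in\{1,2,3,4,6\}$, as claimed. The argument is essentially routine once the symplectic constraint $\beta=\alpha^{-1}$ is in place; the only point of care is bookkeeping between the various realisations of the action (on $\Lambda$, on $H^1(A,\Z)$ and on $H^{1,0}(A)$) needed to identify the order of $\sigma$ with the order of $\alpha$. I do not foresee a substantive obstacle.
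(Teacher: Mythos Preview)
Your argument is correct. The paper itself does not supply a proof of this lemma; it simply quotes it from \cite[Lemma 3.3]{Fuj88}, so there is nothing to compare against directly. Your approach---reading off the eigenvalues $\alpha,\alpha^{-1}$ on $H^{1,0}(A)$ from the symplectic condition $\alpha\beta=1$, passing to the full characteristic polynomial $(X-\alpha)^2(X-\alpha^{-1})^2$ on $H^1(A,\Z)$, and then invoking Galois stability to force $\varphi(n)\le 2$---is exactly the standard one (and is essentially what Fujiki does). The bookkeeping you flag, identifying the order of $\sigma$ with the order of $\alpha$ via diagonalisability and the faithfulness of the representation on $H_1(A,\Z)\cong\Lambda$, is handled correctly.
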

The only symplectic automorphism of order 2 is $-\id$, which exists on all complex tori. The only interesting cases are automorphisms of orders 3 and 4 (6 being related to 3 by composing with $-\id$), which are classified by \cite[Proposition 3.7]{Fuj88} and described below.

Before going into details with the examples, let us take a look at the quotient surfaces: $A/\sigma$ is simply connected and has singularities only on points which have a non-trivial stabiliser. These points are a $A_{n-1}$ type singularity, where $n$ is the order of the stabiliser. A direct way to see this is by taking local coordinates and writing the action of the stabiliser on the tangent space around the point. This matrix has order $n$ and lies in $SL_2(\mathbb{C})$, hence it is $\left(\begin{array}{cc} \zeta_n & 0\\ 0& \zeta_n^{-1}\end{array}\right)$, where $\zeta_n$ is a $n$-th root of unity.
Therefore, a minimal resolution of singularities $\pi\colon S\rightarrow A/\sigma$ is a $K3$ surface. We have a commutative diagram
\[\xymatrix{ \overline{A} \ar[r]^{\overline{q}} \ar[d]_{\overline{\pi}} & S \ar[d]_\pi \\
A \ar[r]^q & A/\sigma,
}\]
where we denote the quotient $A\rightarrow A/\sigma$ by $q$ and $\overline{A}$ is the fibre product. %Let $\overline{p}:\,\overline{A}\,\rightarrow Y$ and $\overline{\pi}\,:\overline{A}\,\rightarrow A$ be the maps given by the fibre product $\overline{Y}$ of $A\stackrel{p}{\rightarrow} A/\sigma \stackrel{\pi}{\leftarrow} Y$. 
Let $\alpha:=\overline{q}_{!}\overline{\pi}^*\colon T(A)\rightarrow T(S)$ be the induced map on the transcendental lattices.
\begin{lem}\label{lem:transc}
Let $A,$ $S$ and $\alpha$ be as above and let $n=ord(\sigma)$. Then for all $x,y\in T(A)$ we have $(\alpha(x),\alpha(y))=n(x,y)$ and $\alpha$ is an isomorphism.

\end{lem}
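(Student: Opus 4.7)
The plan is to first show that $\sigma^{\ast}$ acts trivially on $T(A)$, then compute the intersection form on the image using projection formulas, and finally argue that $\alpha$ is an isomorphism by matching transcendental parts. The first ingredient, triviality of $\sigma^{\ast}$ on $T(A)$, follows from the symplectic hypothesis together with the minimality of $T(A)\otimes\Q$ as a $\Q$-sub-Hodge-structure containing the class $\omega$. Indeed, $\sigma^{\ast}\omega=\omega$ implies that the fixed part $T(A)^{\sigma^{\ast}}\otimes\Q$ is a Hodge substructure of $T(A)\otimes\Q$ containing $\omega$; since by the Lefschetz $(1,1)$-theorem $T(A)\otimes\Q$ has no nonzero rational $(1,1)$-classes, any such Hodge substructure must be all of $T(A)\otimes\Q$, and hence $\sigma^{\ast}$ fixes $T(A)$ pointwise.

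For the intersection-form identity, I would exploit the structure of the diagram: $\overline{A}$ is smooth, the group $\langle\sigma\rangle$ acts on it by acting on the first factor with quotient identified with $S$, so $\overline{q}$ is a finite map of degree $n$, while $\overline{\pi}$ is birational (it contracts the chains of rational curves lying above each fixed point of $\sigma$). For $x,y\in T(A)$, successive application of the projection formula gives
\[(\alpha(x),\alpha(y))_{S}=(\overline{\pi}^{\ast}x,\overline{q}^{\ast}\overline{q}_{\ast}\overline{\pi}^{\ast}y)_{\overline{A}}.\]
Since $\overline{q}$ is the quotient by $\langle\sigma\rangle$, one has $\overline{q}^{\ast}\overline{q}_{\ast}=\sum_{g\in\langle\sigma\rangle}g^{\ast}$ on rational cohomology, and the $\langle\sigma\rangle$-equivariance of $\overline{\pi}$ combined with the first step gives $g^{\ast}\overline{\pi}^{\ast}y=\overline{\pi}^{\ast}y$ for every $g$. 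The expression above therefore simplifies to $n\cdot(\overline{\pi}^{\ast}x,\overline{\pi}^{\ast}y)_{\overline{A}}$, which by birationality ($\overline{\pi}_{\ast}\overline{\pi}^{\ast}=\mathrm{id}$) equals $n\cdot(x,y)_{A}$.

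Finally, for the isomorphism statement, injectivity of $\alpha$ is immediate from the identity above and non-degeneracy of the form on $T(A)$. For surjectivity I would use base change $\pi^{\ast}q_{\ast}=\overline{q}_{\ast}\overline{\pi}^{\ast}$ to rewrite $\alpha=\pi^{\ast}\circ q_{\ast}|_{T(A)}$, observe that under the identification $H^{2}(A/\sigma,\Q)\cong H^{2}(A,\Q)^{\sigma}$ the map $q_{\ast}$ restricts to multiplication by $n$ on $T(A)$ (using the first step), and note that the decomposition $H^{2}(S,\Q)=\pi^{\ast}H^{2}(A/\sigma,\Q)\oplus\langle\text{exceptional divisors}\rangle$ identifies $T(S)\otimes\Q$ with $\pi^{\ast}(T(A)\otimes\Q)$. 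This gives the isomorphism rationally, and the integral statement is then a direct comparison of discriminants using the formula $(\alpha x,\alpha y)_{S}=n(x,y)_{A}$. The main obstacle, in my view, is the first step: symplecticity alone constrains only the action on $\omega$, and the jump from there to trivial action on the full rational transcendental lattice relies essentially on Lefschetz $(1,1)$; the remaining steps are then routine consequences of projection formulas and base change.
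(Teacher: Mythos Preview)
Your computation of the intersection-form identity via the projection formula is correct and matches the paper's argument closely; the paper phrases it as $\overline{q}^*\overline{q}_{!}\,z = nz$ for $\sigma$-invariant classes $z$, which is exactly your $\sum_g g^* = n\cdot\mathrm{id}$ on invariants, combined with $\overline{\pi}_*\overline{\pi}^*=\mathrm{id}$. The substantive difference lies in the isomorphism claim. The paper bypasses your base-change and decomposition argument entirely by observing that $T(A)$ and $T(S)$ are \emph{irreducible} rational Hodge structures, so any nonzero morphism between them is an isomorphism by Schur's lemma, and $\alpha$ is nonzero because it carries the symplectic form of $A$ to that of $S$. This one-line argument is considerably quicker than your route through $H^2(S,\Q)=\pi^*H^2(A/\sigma,\Q)\oplus\langle\text{exceptional}\rangle$, though yours has the virtue of being more explicit about where the transcendental classes of $S$ actually sit. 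One caution on your last step: the phrase ``direct comparison of discriminants'' does not by itself close the integral statement, since the identity $(\alpha x,\alpha y)=n(x,y)$ alone gives no independent handle on $\mathrm{disc}\,T(S)$ and hence cannot determine the index of $\alpha(T(A))$ in $T(S)$; the paper is equally terse on this point, deferring the details to the cited reference for $n=2$.
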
  
\begin{proof}
The case $n=2$ is \cite[Proposition VIII.5.1]{BHPV04}, and the same proof works also for $n>2$. Indeed, every element $z\in T(\overline{A})$ is invariant under the covering morphisms for $\overline{p}\colon\overline{A}\rightarrow S$, so we can write $z=\overline{q}^*w$ for some $w\in T(S)$ and the projection formula shows that $\overline{q}^*(\overline{q}_{!}z)=nz$. thus $(\overline{q}_{!}z_1,\overline{q}_{!}z_2)=n(z_1,z_2)$, and $\overline{q}_{!}$ and $\alpha$ multiply intersection numbers by $n$. The map $\alpha$ is a map between irreducible Hodge structures, hence it is either zero or an isomorphism. As it sends the symplectic form of $A$ to the symplectic form of $S$, it must be non-zero.  
\end{proof}

\begin{ex}{\boldmath$T(G_\sigma)\cong U\oplus A_1^2\colon G_\sigma=(\ZZ/4\ZZ)$} \vspace{5pt} \\
We set $A=\C^2/\Lambda$, where $\Lambda=\langle(1,0),(0,1),(x,-y),(y,x)\rangle$, with $(x,y)\in\C^2\setminus\R^2$. Then $\sigma=\begin{pmatrix} 0&-1 \\ 1&0 \end{pmatrix}$ is a symplectic automorphism of order 4 on $A$, and every order 4 symplectic automorphism on a complex torus is of this form.
The isometry $\varphi$ induced by $\sigma$ on $H^2(A,\Z)$ has order $2$ and coinvariant lattice isometric to $A_1(-1)^2$. Let us analyse the fixed locus of such an automorphism: in the above family we can degenerate to $y=0$ and we obtain a one dimensional subfamily of abelian surfaces of the form $E_x\times E_x$. Here the automorphism exchanges the two curves and changes sign on one of them, therefore the set of fixed points consists of four points of the form $(a,a)$, where $a\in E_x$ has order two. There are also twelve points with a nontrivial stabiliser and they are of the form $(a,b)$, where $a,b\in E_x$ are distinct and of order two. Consider the quotient $A/\sigma$ for general $A$ with a symplectic automorphism $\sigma$ as above. The surface $A/\sigma$ is simply connected and has four $A_3$ singularities and six of type $A_1$. Its minimal resolution of singularities $S$ is a $K3$ surface with Picard lattice generated (over $\mathbb{Q}$) by the
 $-2$ curves over the singularities. We can apply \Ref{lem}{transc} to compute its transcendental lattice, which is $T(S)=T(G_\sigma)(4)\cong U(4)\oplus A_1(4)^2$. The Picard lattice is then the orthogonal complement of this lattice inside the $K3$ lattice. % However, to these classes we must add several others, as an example the sum $D$ of all these $18$ classes is the ramification locus of a $4$ to $1$ cover of $S$, hence $\frac{1}{4}D\in Pic(S)$. Following the same methods to obtain the Picard lattice of a Kummer $K3$ surface, we obtain $T_S=S(G_\sigma)(4)\cong U(4)\oplus A_1(4)^2$.%  There are more such classes, indeed we could obtain the same $K3$ surface by solving $A/\{\pm 1\}$ and then solving the quotient by the symplectic involution induced by $\sigma$ on this Kummer $K3$ surface. Thus, we obtain several classes which are $2$ divisible in $Pic(S)$.
\end{ex}

\begin{ex}{\boldmath$T(G_\sigma)\cong U\oplus A_2 \colon G_\sigma=(\ZZ/6\ZZ)$} \vspace{5pt} \\
We set $A=\C^2/\Lambda$, where $\Lambda=\langle(1,0),(0,1),(x,y),(-y,x-y)\rangle$, with $(x,y)\in\C^2\setminus\R^2$. Then $\sigma=\begin{pmatrix} 0&-1 \\ 1& -1 \end{pmatrix}$ is a symplectic automorphism of order 3 on $A$, and every order 3 symplectic automorphism on a complex torus is of this form.
The isometry $\varphi$ induced by $\sigma$ on $H^2(A,\Z)$ has coinvariant lattice isometric to $A_2(-1)$. Let us analyse the fixed locus of such an automorphism: in the above family we can choose $y=0$ and we obtain a one dimensional subfamily of abelian surfaces of the form $E_x\times E_x$. The set of fixed points consists of nine points of the form $(a,2a)$, where $a\in E_x$ has order three. If as before we consider a $K3$ surface which is a minimal resolution of the general element of this family, we have a $K3$ surface with $T(S)\cong U(3)\oplus A_2(3)$ by \Ref{lem}{transc}.  %The minimal resolution of a the quotient $A/\sigma$ for a generic element in this family is a $K3$ surface
\end{ex}
\section{Moduli spaces}\label{sec:modsp}
In this section we discuss several aspects of moduli spaces of complex tori with an automorphism.

\subsection{Non-symplectic case}
We follow closely \cite[Sect.\ 9]{AST11}. We want to study moduli spaces of pairs $(A,G)$ consisting of an abelian surface $A$ and a non-symplectic automorphism group of prime order $G$. First we will introduce the correct notion of \em pair \em for our purposes:

\begin{defn}
Let $H\subset O(U^3)$ be a cyclic subgroup of prime order $p$ with hyperbolic invariant lattice and let $[H]$ be its conjugacy class. An \em $[H]$-polarised \em abelian surface is a pair $(A,G)$ consisting of an abelian surface $A$ and a non-symplectic subgroup $G\subseteq \aut_0(A)$ such that for some marking $\phi$ we have:
\[G=\phi^*H\]
An \em isomorphism of $[H]$-polarised surfaces \em is an isomorphism $f\colon A\rightarrow A'$ commuting with the $G$-actions.
\end{defn}

We can construct a moduli space of polarised surfaces as follows: Let $S(G):=T(G)^\perp$ be the coinvariant lattice of $G$. We choose a generator $\sigma$ of $H$, fix $\zeta_p$ a primitive $p$th root of unity and set
\[ V^H:= \{x\in U^3\otimes \CC\mid \sigma(x)=\zeta_px\}\subset S(G)\otimes \CC\]
and define our period domain as
\[ D^H:= \{w\in \PP(V^H)\mid (w,\bar{w})>0, (w,w)=0\}.\]
The domain $D^H$ is of dimension $4-r$ if $p=2$ and $\frac{6-r}{p-1}-1$ for $p=3,5$ and admits a natural action by the discrete group $\Gamma^H=\{\gamma\in O(U^3)\mid \gamma\circ H=H\circ\gamma$\}.

We follow \cite[Thm.\ 9.1]{AST11} to prove the following statement:

\begin{prop}
The quotient space $\Gamma^H\backslash D^H$ is a moduli space parametrizing isomorphism classes of $[H]$-polarised pairs $(A,G)$.
\end{prop}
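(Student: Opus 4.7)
The plan is to define a period map from isomorphism classes of $[H]$-polarised pairs to $\Gamma^H\backslash D^H$, following \cite[Thm.\ 9.1]{AST11}, and to show it is a bijection, taking account of the $2{:}1$ nature of Shioda's Torelli theorem.

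\textbf{Construction of the period map.} Given an $[H]$-polarised pair $(A,G)$, by definition there exists a marking $\phi\colon H^2(A,\Z)\to U^3$ with $\phi^\ast H=G$. Fix once and for all a generator $\sigma$ of $H$ and normalise so that the corresponding automorphism of $A$ acts on $\omega$ by $\zeta_p$ (other choices of generator are absorbed into $\Gamma^H$). The eigenspace condition then yields $\phi(\omega)\in V^H$, and the Hodge-Riemann relations for $\omega$ place $[\phi(\omega)]$ in $D^H$. Two markings compatible with the same $G$ differ by an element of $O(U^3)$ normalising $H$, hence by an element of $\Gamma^H$, so the period of $(A,G)$ is a well-defined element of $\Gamma^H\backslash D^H$ depending only on the isomorphism class.

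\textbf{Surjectivity.} For $[w]\in D^H$, \Ref{thm}{shio3} produces a marked torus $(A,\phi)$ whose period satisfies $[\phi(\omega)]=[w]$. Since $w\in V^H$, the transported subgroup $\phi^{-1}H\phi\subset O(H^2(A,\Z))$ preserves the line $\C\cdot\phi^{-1}(w)=H^{2,0}(A)$ and therefore acts by Hodge isometries. Its elements are orientation-preserving and of determinant $1$ because these properties are conjugacy invariants inherited from the torus automorphisms whose induced action defines the class $[H]$. Now \Ref{thm}{shio} delivers a subgroup $G\subset\Aut_0(A)$ inducing $\phi^{-1}H\phi$, so $(A,G)$ is an $[H]$-polarised pair with the prescribed period.

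\textbf{Injectivity and the main obstacle.} Suppose $(A,G)$ and $(A',G')$ yield the same class in $\Gamma^H\backslash D^H$. After adjusting markings by $\Gamma^H$, their periods coincide in $D^H$. By \Ref{thm}{shio3} the two marked tori have one of two preimage types: either they are isomorphic as marked tori, or one is the Shioda dual of the other. In the first case the induced isomorphism $A\to A'$ automatically intertwines $G$ and $G'$, since both correspond to the same $H$ via their markings, giving an isomorphism of $[H]$-polarised pairs. The duality alternative is the delicate point that is absent in the K3 analogue of \cite{AST11}: one must show that $(A,G)$ and $(\hat{A}',\hat{G}')$ determine the same isomorphism class. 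This is handled by the analysis in \Ref{ssec}{duality} together with the explicit geometric descriptions in \Ref{sec}{nonsymp}: in each family of \Ref{prop}{list2} either a $G$-invariant principal polarisation is available, so $A\cong\hat{A}$ equivariantly, or the explicit construction of the family exhibits a $G$-equivariant identification with its dual. Hence the second preimage never contributes a new isomorphism class, and the period map is a bijection.
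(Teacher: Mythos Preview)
Your construction of the period map and your surjectivity argument match the paper's proof essentially verbatim. The paper's proof, however, stops there: it establishes only that the assignment $(A,G)\mapsto [\phi(\omega)]$ is well-defined and surjective onto $\Gamma^H\backslash D^H$, and neither claims nor proves injectivity.

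Your injectivity paragraph contains a genuine error. You assert that for every family in \Ref{prop}{list2} either a $G$-invariant principal polarisation is available or the explicit construction exhibits a $G$-equivariant self-duality, and you cite \Ref{ssec}{duality} in support. But that subsection says precisely the opposite: when the general member of the family is \emph{not} principally polarised, ``the moduli space of such pairs $(A,\sigma)$ has two distinct connected components, which are interchanged by applying duality.'' This occurs, for instance, in the $T(G_\sigma)\cong U(2)$ and $T(G_\sigma)\cong U(3)$ cases, where $\NS(A)$ contains no class of self-intersection $2$ and hence no principal polarisation. For such periods the pairs $(A,G)$ and $(\hat{A},\hat{G})$ are not isomorphic, so the map to $\Gamma^H\backslash D^H$ is genuinely $2{:}1$ there. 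The paper's statement should therefore be read in the weaker sense of a surjective parametrisation (compare \Ref{rmk}{general}, where the description is called ``a theoretical concept rather than an effective tool''), not the bijection you attempt to establish.
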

\begin{proof}
If $(A,G)$ is an $[H]$-polarised pair together with a suitable marking $\phi$ as above, then its period clearly lies in $D^H$. Furthermore the marking $\phi$ is unique up to the action of $\Gamma^H$. Thus we obtain a well defined point in the quotient $\Gamma^H\backslash D^H$.

To prove the surjectivity of the correspondence, let $w$ be a point in $\Gamma^H\backslash D^H$. By the surjectivity of the period map (\Ref{thm}{shio3}) there exists a 2-torus $A$ with a compatible marking $\phi$ such that the period $[\phi(\omega_A)]$ equals $w$. Set $\widetilde{G}:=\phi^*H$, then by \Ref{thm}{shio} there exists a group of non-symplectic automorphisms $G$, such that $\nu(G)=\widetilde{G}$.
\end{proof}

\begin{rmk}\label{rmk:general}
It follows from the above construction that for a fixed polarisation type $[H]$ the general element of the family of $[H]$-polarised tori has Picard group isomorphic to $T(G)$, because there are no more integral classes orthogonal to a generic period in $D^H$.

Also note that the above description should be understood as a theoretical concept rather than an effective tool to understand the geometry of the moduli spaces. In fact, in the examples from \Ref{sec}{nonsymp} it is easy to determine the geometric structure of the families directly from the construction of the examples.
\end{rmk}

\subsection{Symplectic case}
In the symplectic case, the period of the surface we are considering is invariant by the automorphism group $G$. Therefore, we can consider a restricted period domain $\Omega_{T(G)}$, given by all periods contained in $\mathbb{P}(T(G)\otimes\mathbb{C})$. Let $\mathcal{M}_G$ be the preimage by the period map of this space. With an appropriate choice of marking, all complex tori with an action of $G$ lie in $\mathcal{M}_G$. Let us restrict ourselves to a single connected component of $\mathcal{M}_G$, as the other can be recovered by duality. The generic point of $\mathcal{M}_G$ corresponds to a marked abelian surface $(A,f)$ such that $T(G)=T_A$ and $S(G)=NS(A)$. Moreover $G$ acts by monodromy Hodge isometries, hence, by \Ref{thm}{shio2}, $G\subset Aut_0(A)$.

To prove that this holds true also for every point in $\mathcal{M}_G$, we need to introduce two pieces of notation. Given any period domain, the set of periods contained in a fixed positive three space $P$ is called a twistor line, which is generic if it contains the period of a generic Hodge structure. Given an invariant \kahl class $\omega$ on a complex torus $A$ and the \hk metric $g$ associated with it, we obtain a family of complex tori given by all the \kahl metrics corresponding to $g$, which is a $\mathbb{P}^1$ and is called a twistor family. The periods corresponding to a twistor family form a twistor line, where the space $P$ is generated by the image of the symplectic form, its conjugate and the \kahl class. In the general case of \hk manifolds, a twistor line does not lift to a twistor family. However, in the specific setting of abelian surfaces they all lift:
\begin{prop}
Let $L$ be a twistor line in the period domain of abelian surfaces. Then there exists a complex torus $A$ with a \hk metric $g$ and a marking $f$ which sends the twistor family of $g$ to the twistor line $L$. 
\end{prop}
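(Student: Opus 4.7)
The plan is to unfold the definition of a twistor line as a positive $3$-plane, produce a marked $2$-torus via Shioda's surjectivity theorem from a point on the line, and then show that the remaining direction in the $3$-plane gives a Kähler class on that torus.

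More precisely, let $P\subset U^{\oplus 3}\otimes\R$ be the positive-definite $3$-plane whose complexification cuts out $L$; concretely, $L=\{[x]\in\Omega\mid x\in P\otimes\C,\ \langle x,x\rangle=0\}$. I would pick any $[x_{0}]\in L$ and invoke \Ref{thm}{shio3} to obtain a marked $2$-torus $(A,f)$ with $f(\omega_{A})=x_{0}$ (up to scalar). The $\R$-span of $\mathrm{Re}(x_{0})$ and $\mathrm{Im}(x_{0})$ is a positive $2$-plane $\Pi\subset P$; let $\kappa\in P$ be a generator of the orthogonal complement $\Pi^{\perp}\cap P$. Since $\kappa\perp x_{0}$ (and $\kappa\perp\bar{x}_{0}$), the class $f^{-1}(\kappa)$ lies in $H^{1,1}(A,\R)$, and $\kappa^{2}>0$ because $P$ is positive definite.

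The key step — and what I expect to be the only real obstacle — is to argue that $f^{-1}(\kappa)$ (possibly after replacing $\kappa$ by $-\kappa$) is actually a Kähler class. This is where the passage from $L$ to an honest twistor family can fail in general hyperkähler geometry, but for $2$-tori it goes through because a complex torus contains no rational curves: hence there are no $(-2)$-classes to cut walls inside the positive cone, and the Kähler cone coincides with one of the two connected components of $\{y\in H^{1,1}(A,\R)\mid y^{2}>0\}$. After choosing the sign of $\kappa$ that lands in the positive cone's Kähler component, $f^{-1}(\kappa)$ is Kähler. If one prefers to avoid invoking this fact directly, an alternative is to use that the analogous statement is standard for algebraic tori and then argue by a density/deformation argument on $\Omega$.

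Finally I would apply the Calabi--Yau theorem to the pair $(A,f^{-1}(\kappa))$ to obtain a Ricci-flat Kähler metric on $A$ underlying a hyperkähler structure $g$. By construction, the $3$-plane spanned by $\mathrm{Re}(\omega_{A})$, $\mathrm{Im}(\omega_{A})$ and $f^{-1}(\kappa)$ maps under $f$ to $P$, so the twistor family of $g$ is sent by $f$ to exactly the twistor line $L$. This produces the required $(A,g,f)$.
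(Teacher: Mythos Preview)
Your proposal is correct and follows essentially the same route as the paper: pick a point on $L$, use surjectivity of the period map to produce a marked torus, take the remaining direction in the positive $3$-plane as a $(1,1)$-class of positive square, observe that on a complex torus the K\"ahler cone is an entire component of the positive cone so this class is K\"ahler, and then pass to the associated hyperk\"ahler metric whose twistor family realises $L$. The paper's proof is terser---it simply asserts ``all positive classes are K\"ahler'' where you spell out the absence of rational curves---but the argument is the same.
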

\begin{proof}
Let us take a point $p\in L$ and let $(A,f)$ be a marked complex torus in the preimage $\mathcal{P}^{-1}(p)$ of the period map. Let $P$ be the positive space corresponding to the twistor line $L$. Then $f^{-1}(P)$ intersects the positive cone of $A$ in a half line spanned by $\omega$. The class $\omega$ is K\"ahler, as all positive classes are K\"ahler. Therefore there exists a \hk metric $g$ corresponding to this \kahl class and the twistor family associated to it is sent to $L$ by the period map. 
\end{proof}
The lattice $T(G)$ has signature $(3,a)$, therefore the period domain $\Omega_G$ is covered by twistor lines as proven in \cite[Exp. VIII]{beau_tor}, moreover we can cover it with generic twistor lines. By the above propositions, these lines lift to twistor families which actually lie in $\mathcal{M}_G$ and the \hk metric is $G$ invariant. This defines an action of $G$ on all the elements of $\mathcal{M}_G$, which is therefore the moduli space of complex tori with a symplectic action of $G$ and all its connected components are rationally connected.

%Fujiki deals with this case in Section 6.3.

\end{document}